\author{Janin Heuer}
\address{Janin Heuer, Technische Universit\"at Braunschweig, Institut f\"ur Analysis und Algebra, AG Algebra, Universit\"atsplatz 2, 38106 Braunschweig,
 Germany\medskip}
\email{janin.heuer@tu-braunschweig.de}
\author{Timo de Wolff}
\address{Timo de Wolff, Technische Universit\"at Braunschweig, Institut f\"ur Analysis und Algebra, AG Algebra, Universit\"atsplatz 2, 38106 Braunschweig,
 Germany\medskip}
\email{t.de-wolff@tu-braunschweig.de}
\subjclass[2010]{12D15, 14P99, 90C26, 90C30, 93D05, 93D30, 37C75}
\keywords{certificate of nonnegativity, circuit polynomial, SONC, nonnegative polynomial, DSONC, Lyapunov stability, dynamical systems}
\title[]{Initial Application of SONC to Lyapunov Stability of Dynamical Systems}
\begin{document}

\begin{abstract}
	Certifying the stability of dynamical systems is a central and challenging task in control theory and systems analysis. 
	To tackle these problems we present an algorithmic approach to finding polynomial Lyapunov functions. 
	Our method relies on sums of nonnegative circuit functions (SONC), a certificate of nonnegativity of real polynomials. 
	We show that both the problem of verifying as well as the more difficult task of finding Lyapunov functions can be carried out via relative entropy programming when using SONC certificates.
	This approach is analogue yet independent to finding Lyapunov functions via sums of squares (SOS) certificates and semidefinite programming.
	Furthermore, we explore whether using the related, recently introduced DSONC certificate is advantageous compared to SONC for this type of problem.
	We implemented our results, and present examples to show their applicability.
\end{abstract}

\maketitle

\setlength{\abovedisplayskip}{5pt}
\setlength{\belowdisplayskip}{5pt}

\section{Introduction}

In this article we investigate dynamical systems which are modeled by a finite number of polynomial first-order differential equations. 
Specifically, we are interested in the stability of equilibrium points of such systems. 
This is most commonly characterized in terms of \struc{Lyapunov stability}, see \cref{def:stability}. 

Verifying stability is a problem that occurs in many areas of control theory and systems analysis \cite{KhalilNonlinearSystems,Hahn:Stability} as well as in applications such as population dynamics \cite{Stamova:Lyapunov}, chemical reactions \cite{Erdi:Mathematical,Feinberg:Existence}, aerospace guidance systems \cite{Park:Guidance,Roskam:Airplane}, and chaos theory \cite{Williams:Chaos}.
Determining the Lyapunov stability of equilibria is a highly challenging problem, since, in general, there do not exist systematic algorithms for finding Lyapunov functions.
For certain cases, searching for natural energy-based Lyapunov functions can be successful, see \cite{KhalilNonlinearSystems}. 
But finding exact energy functions can also be a difficult task, and the property of having energy may not even exist for, e.g., economic, biological, or abstract mathematical systems. 

In this paper we give an algorithmic approach to finding Lyapunov functions for polynomial dynamic systems.
Due to Lyapunov's stability theorem \cref{thm:lyapunovstability}, we know that deciding stability is closely related to deciding nonnegativity of real multivariate polynomials, which is, however, a hard problem (both theoretically and practically).
Thus, assuming the existence of polynomial Lyapunov functions, it is reasonable to relax the problem by testing whether the polynomial at hand certifies a certain (easier to test) certificate of nonnegativity.
Existing approaches using this connection rely on the \struc{sum of squares (SOS)} certificate, see e.g. \cite{PapachristodoulouAdvancesSOS,PapachristodoulouPrajna:LyapunovSOS,Hafstein:Lyapunov,Tan:Nonlinear}. 
Obviously, a polynomial which admits a representation as a sum of squares of other polynomials is necessarily nonnegative.
Finding such an SOS representation, however, requires only to solve a semidefinite program, a well-known class of convex optimization problems.
For an overview on SOS, see e.g. \cite{Laurent:Survey,Lasserre:GlobalOpt,Parrilo:Thesis}.

Another certificate of nonnegativity is the \struc{sum of nonnegative circuits (SONC)} certificate. 
The first study of circuit polynomials was conducted for the special case called \struc{agiforms} by Reznick in \cite{Reznick:AGI}. 
The general polynomial formulation was then given by Iliman and the second author in \cite{Iliman:deWolff:Circuits}. 
In the signomial setting, the corresponding theory was developed independently shortly after in \cite{Chandrasekaran:Shah:SAGE-REP}.
The theory was also independently developed in context of chemical reaction networks in \cite{pantea-jac}.
A circuit polynomial is mainly characterized by the fact that its support forms a minimally affine dependent set. 
It also satisfies a number of other conditions which we formally present in \cref{def:circuitpolynomial}.
In order to decide whether a single circuit polynomial is nonnegative, one only needs to solve a system of linear equations. 
This was proven by Iliman and the second author and can be motivated as a consequence of the \struc{arithmetic mean / geometric mean (AM/GM) inequality}.
This approach to polynomial nonnegativity is independent of SOS and is especially suited for large, sparse polynomials. 
Certifying nonnegativity using the SONC approach can be done via relative entropy programming, which is also a convex optimization problem; see \cite{Iliman:deWolff:FirstGP,Chandrasekaran:Shah:SAGE-REP,Murray:Chandrasekaran:Wierman:SigOptREP}.

\medskip

In this paper, we show that certificates of nonnegativity can be used to prove Lyapunov stability of polynomial dynamical systems, see \cref{thm:gen_stability}. 
While this holds for arbitrary certificates of nonnegativity, we especially focus on the case of the SONC cone. 
We further investigate how useful the \struc{DSONC} cone, an object recently introduced in \cite{Heuer:deWolff:DualityOfSONC}, is for this type of problem, see \cref{thm:dsonc_stability}.
This is a subcone of the SONC cone which is derived from the dual of the SONC cone and has the added advantage that verifying containment of a given polynomial in this cone can be realized via linear programming. 

While the first part of \cref{sec:PolyLyap} solely focuses on \emph{verifying} whether a given function is a Lyapunov function, \cref{sec:StabilitySONC} deals with the more difficult question of actually \emph{finding} a Lyapunov function of a given system. 
We prove in \cref{prop:LyapSONCREP} that despite the added difficulty of not having a concrete Lyapunov candidate with which to verify stability, searching for a Lyapunov function still only requires relative entropy programming. 
We present an algorithm realizing this in \cref{Alg:primalREPalg}. 
In \cref{sec:StabilityDSONC} we show that if we attempt to search for a Lyapunov function using the DSONC approach, then we can no longer to this via linear programming. 
Instead, we prove in \cref{prop:LyapDSONCREP} that Lyapunov search via the DSONC cone also requires solving a relative entropy program, which means that using the DSONC cone in this case no longer provides a computational advantage.
In \cref{sec:Examples} we present selected examples which illustrate the functionality of our algorithm. 
We close the article by providing an outlook detailing possible next steps to further develop this line of research.

\section*{Acknowledgments}

We thank Christian Kirches for his helpful comments. 
The authors were supported by the DFG grant WO 2206/1-1. 

\section{Preliminaries}

\subsection{Notation}

Throughout this article we use the following basic notations.
We refer to the sets of natural numbers, and real numbers via the symbols $\struc{\N}$, and $\struc{\R}$, respectively.
If we restrict $\R$ to the positive orthant, then we write $\struc{\R_{>0}} = \{ x \in \R \ : \ x > 0 \}$.
If we refer to vectors, then we use bold symbols. 
E.g., we write the vector $(x_1, \ldots, x_n) \in \R^n$ as $\struc{\xb}$. 
For a given finite set $S \subseteq \R^n$ we denote by $\struc{\conv(S)}$ the \struc{convex hull} of $S$, and we refer to the set of \struc{vertices} of $\conv(S)$ by $\struc{\vertices{\conv(S)}}$. 
The \struc{cardinality} of $S$ is denoted by $\struc{\# S}$.
If $C$ is a given cone in $\R^n$, or a linear space, then we write $\struc{\check{C}}$ for the \struc{dual cone/space}.

\subsection{Nonlinear Systems Analysis}

The objective of stability theory is to analyze the behavior of solutions of systems of differential equations without the need to compute their solution trajectories explicitly.
While stability was studied by Lagrange as early as 1788 \cite{Lagrange:Mecanique}, the basic definitions as well as many fundamental theorems were introduced by Lyapunov in 1892 in his doctoral dissertation, see \cite{Lyapunov:1992} for a published and translated version of the original Russian text.
In this subsection we give a brief overview about the subject. 
For a more comprehensive introduction see, e.g., \cite{KhalilNonlinearSystems}, \cite{VidyasagarNonlinearSystems}, or \cite{SastryNonlinearSystems}.\\

\medskip

In this paper we study the stability of polynomial systems of differential equations of the form
\begin{align}
		\begin{aligned}
			& & \dot{\xb}(t) & \ = \ & \Vector{f}(\xb(t)), \\
			& \text{subject to } & \xb(t_0) & \ = \ & \xb_0.
		\end{aligned}
	\label{eq:nonlinearsystem}
\end{align}
Here, $\struc{\xb(t)} \in \R^n$ denotes the \struc{state} of the system in $t \ge 0$ with corresponding vector of derivatives $\struc{\dot{\xb}(t)}$, $\struc{\Vector{x_0}} \in \cU$ is the \struc{initial state} for some neighborhood $\cU \subseteq \R^n$ of the origin, and $\Vector{f}$ is a vector of real polynomial functions $f_i: \cU \rightarrow \R^n$, $i \in [n]$.
Note that solutions to \cref{eq:nonlinearsystem} exist and are unique (locally), since $\Vector{f}$ is locally Lipschitz in $\cU$. 
This is a direct consequence of the Picard--Lindel\"of Theorem; see, e.g., \cite{Hale:ODE}.

\begin{definition}
	A point $\struc{\xb^\ast} \in \R^n$ is called an \struc{equilibrium point} of \cref{eq:nonlinearsystem}, if $\Vector{f}(\xb^\ast(t)) = \ob$ for all $t \ge 0$.
\end{definition}

In other words, an equilibrium point $\xb^\ast \in \R^n$ is a constant solution to the system \cref{eq:nonlinearsystem}. 
Throughout the paper we assume without loss of generality that $\xb^\ast = \ob$ is an equilibrium point.
In Lyapunov stability theory, one is interested in the behavior of a system's trajectories for initial states that are close to an equilibrium.

\begin{definition}
	An equilibrium $\xb^\ast \in \R^n$ of the system \cref{eq:nonlinearsystem} is called \struc{stable}, if for every $\epsilon > 0$ there exists some $\delta(\epsilon) > 0$ such that $\norm{\Vector{x_0} - \xb^\ast} < \delta(\epsilon) \ \Rightarrow \ \norm{\xb(t) - \xb^\ast} < \epsilon$ for all $t \ge 0$. 
	An equilibrium is called \struc{asymptotically stable} if it is stable and $\delta$ can be chosen such that $\norm{\Vector{x_0}} < \delta \ \Rightarrow \ \lim_{t \to \infty} \xb(t) = \xb^\ast$.
	If an equilibrium is not stable, then it is called \struc{unstable}.
\label{def:stability}
\end{definition}

A common way of analyzing the stability of a system is known as Lyapunov's Direct Method, which we present in the following theorem. 

\begin{theorem}
	\label{thm:lyapunovstability}
	The equilibrium $\ob = \xb^\ast \in \cU \subset \R^n$ of the system \cref{eq:nonlinearsystem} is stable, if there exists a continuously differentiable function $V : \cU \rightarrow \R$ such that
	\begin{align*}
		V(\ob) & \ = \ 0, \\
		V(\xb) & \ > \ 0 \qquad \text{ for all } \xb \in \cU \setminus \{ \ob \}, \\
		\dot{V}(\xb) & \ \le \ 0 \qquad \text{ for all } \xb \in \cU.
	\end{align*}
	If furthermore
	\begin{align*}
		\dot{V}(\xb) \ < \ 0 \qquad \text{ for all } \xb \in \cU \setminus \{ \ob \},
	\end{align*}
	then $\xb^\ast$ is asymptotically stable.
\end{theorem}

The function $V$ used in this theorem can be interpreted as a function which behaves similarly to the function describing the total energy stored in a physical system. 
That is, if we take $V$ to be a function describing the (positive) amount of energy present in a system, and if this energy decreases over time ($-\dot{V} \le 0$), then the system is considered stable if it reaches a final resting state ($V(\ob) = 0$).
Lyapunov's contribution is showing that we can prove whether an equilibrium is stable, asymptotically stable, or unstable, without knowing the exact physical energy. 
Instead, we can just consider a function that behaves similarly.
This function is called the \struc{Lyapunov function}. \\

The goal of this article is finding a new algorithm that searches for such polynomial Lyapunov functions using methods from the area of polynomial nonnegativity.
In the next subsection we see that, using \cref{thm:lyapunovstability} as a starting point, polynomial optimization is a natural way of finding polynomial Lyapunov functions.

\subsection{Polynomial Optimization}

We give a brief overview about the fundamentals of polynomial optimization and the closely related problem of deciding nonnegativity of polynomials.
For a comprehensive introduction we refer the interested reader to, e.g., \cite{Laurent:Survey,Lasserre:IntroductionPolynomialandSemiAlgebraicOptimization,Blekherman:Parrilo:Thomas}. \\

\medskip

A real polynomial $p$ in $\xb \in \R^n$ is a function of the form
\begin{align}
    p(\xb) \ = \ \sum_{\alpb \in A} c_{\alpb}\xb^{\alpb},
\label{eq:polynomial}
\end{align}
where the set $A \subset \N^n$ of exponents is finite, and we use the multi-index notation $\struc{\xb^{\alpb}} \ = \ \prod_{i = 1}^n x_j^{\alpha_j}$. 
The \struc{coefficients} $c_{\alpb}$ are in $\R$ for every exponent $\alpb \in A$.
We call the set $\struc{\supp(p)} = \{\alpb \in A \ : \ c_{\alpb} \ne 0\}$ the \struc{support} of $p$.
The convex hull $\conv(\supp(p))$ of the support set of $p$ is called the \struc{Newton polytope} of $p$.
Let 
\begin{align*}
	\struc{\polys} \ = \ \spann_{\R} \set{\xb^{\alpb} \ : \ \alpb \in A}
\end{align*}
denote the set of real polynomials with fixed support set $A$.
Note that every polynomial in $\polys$ is uniquely described by its \struc{coefficient vector} $\struc{(c_{\alpb})_{\alpb \in A}}$, which is an element in $\R^{\# A}$.
This means that we can find an isomorphism $\polys \simeq \R^{\# A}$.
For this reason, we use the notation $\polys$ for both the set of polynomials supported on $A$, and the set of vectors indexed by elements in $A$.
Whenever we do not restrict the support sets of our polynomials, i.e., when we refer to the space of all real multivariate polynomials, we write $\struc{\R[\xb]}$. 
We say that a given polynomial $p$ is \struc{nonnegative} if $p(\xb) \ge 0$ for all $\xb \in \R^n$.


\begin{definition}
    Let $p$ be a polynomial in $\R[\xb]$. 
    The problem
    \begin{align}
        \min_{x \in \R^n} \ p(\xb) 
        \label{eq:CPOP}
        \tag{POP}
    \end{align}
    is called an \struc{(unconstrained) polynomial optimization problem (POP)} with \struc{objective function} $p$.
\label{def:cpop}
\end{definition}



In general, POPs are not convex and solving them is NP-hard, see e.g. \cite{Laurent:Survey}. 

Observe that the problem \cref{eq:CPOP} can be equivalently stated as \struc{nonnegativity} problem 
\begin{align}
	\max \set{\gamma \in \R \ : \ p(\xb) - \gamma \ge 0 \ \text{for all } \xb \in \R^n}.
	\label{eq:initialproblem}
\end{align}
Specifically, we are interested in the \struc{nonnegativity cone with respect to $A$} 
\begin{align*}
	\struc{\ANonnegCone} = \set{ p \in \R[\xb] \ : \ \supp{(p)} \subseteq A, \ p(\xb) \ge 0 \ \text{ for all } \xb \in \R^n},
\end{align*}
where $A$ is a finite subset of $\N^n$.

The reformulation \cref{eq:initialproblem} allows us to use objects called \struc{certificates of nonnegativity} to find lower bounds to \cref{eq:CPOP}.
These are conditions on real multivariate polynomials which imply nonnegativity, but are easier to test than nonnegativity itself.
We further require that these certificates hold for a reasonably large class of polynomials.
The classical example of a certificate of nonnegativity are \struc{sums of squares (SOS)}
\begin{align*}
	\struc{\Sigma_A} \ = \ \set{p \in \R[\xb] \ : \ \supp(p) \subseteq A, \ p(\xb) = \sum_{i = 1}^k s_i^2(\xb) \text{ for some } s_i \in \R[\xb]} \subseteq \ANonnegCone.
\end{align*}
For more on SOS, see e.g. \cite{Laurent:Survey,Lasserre:GlobalOpt}.

\subsubsection{The SONC Cone}
A more recent approach to nonnegativity is a class of polynomials called \struc{circuit polynomials}.
These polynomials give rise to a new certificate of nonnegativity called the \struc{sum of nonnegative circuits (SONC)} certificate.
The idea of circuit polynomials was first introduced by Reznick in \cite{Reznick:AGI}. 
Reznick investigates a special form of circuit polynomials which he calls \struc{simplicial agiform}.
The general definition was given by Iliman and the second author in \cite{Iliman:deWolff:Circuits}.
Recall that a set $A$ is called a \struc{circuit} if $A$ is minimally affine dependent, i.e., if all real subsets of $A$ are affinely independent; see, e.g., \cite{DeLoera:et:al:Triangulations}.

\begin{definition}[Circuit Polynomial]
	A polynomial $p \in \R[\xb]$ with finite support set $A \subset \N^n$ is called a \struc{circuit polynomial} if 
	\begin{enumerate}
		\item $A$ is a simplicial circuit, 
		\item it satisfies 
			\begin{align*}
				\alpb \in \vertices{\conv(A)} \ \Rightarrow \ \alpb \in (2\N)^n \ \text{ and } \ c_{\alpb} > 0, \text{ and}
			\end{align*}
		\item if $\betab \in A \setminus \vertices{\conv(A)}$, then $\betab$ is a point in the strict interior of $\conv(A)$.
	\end{enumerate}
\label{def:circuitpolynomial}
\end{definition}

Note that \cref{def:circuitpolynomial} implies that circuit polynomials are of the form
\begin{align*}
	p(\xb) = \sum_{\alpb \in A^+} c_{\alpb} \xb^{\alpb} + c_{\betab}\xb^{\betab},
\end{align*}
where $\struc{A^+} = \set{\alpb \in A \ : \ \alpb \in (2\N)^n, c_{\alpb} > 0} \subset (2\N)^n$ and $\betab$ is contained in the strict interior of the Newton polytope $\conv(A)$ of $p$. 
Since $\conv(A)$ is, by definition, a simplex, there exist unique barycentric coordinates of $\betab$ with respect to $A^+$.
We denote these by $\struc{\Vector{\lambda}}$.
Iliman and the second author further show that nonnegativity of circuit polynomials can be decided by an invariant called the \struc{circuit number}.

\begin{theorem}[\cite{Iliman:deWolff:Circuits}]
	A circuit polynomial $p(\xb) = \sum_{\alpb \in A^+} c_{\alpb} \xb^{\alpb} + c_{\betab}\xb^{\betab}$ is nonnegative if and only if 
	\begin{enumerate}
		\item $p$ is a sum of monomial squares, i.e., $\betab \in (2\N)^n$ and $c_{\betab} > 0$, or 
		\item $|c_{\betab}| \ \le \ \prod\limits_{\alpb \in A^+} \left( \frac{c_{\alpb}}{\lambda_{\alpb}}\right)^{\lambda_{\alpb}} \ =: \ \struc{\Theta_p}$.
	\end{enumerate}
	The invariant $\Theta_p$ is called the \struc{circuit number} of $p$.
\label{thm:nonnegativecircuit}
\end{theorem}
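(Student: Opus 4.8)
The plan is to reduce everything to the weighted AM/GM inequality on the positive orthant and then transport the resulting estimate to all of $\R^n$ using that the exponents in $A^+$ are even. Throughout I would record the barycentric representation $\betab = \sum_{\alpb \in A^+} \lambda_{\alpb}\,\alpb$ with $\lambda_{\alpb} > 0$ and $\sum_{\alpb \in A^+} \lambda_{\alpb} = 1$, which exists and is unique because $\conv(A^+)$ is a simplex containing $\betab$ in its interior. Case (1) is immediate: if $\betab \in (2\N)^n$ and $c_{\betab} > 0$, then every term of $p$ is a monomial square, so $p \ge 0$ trivially. Hence from now on I assume $p$ is not a sum of monomial squares, i.e. either $c_{\betab} < 0$ or $\betab \notin (2\N)^n$.

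For sufficiency, assume $|c_{\betab}| \le \Theta_p$. First I would fix $\xb \in \R_{>0}^n$ and apply the weighted AM/GM inequality to the positive numbers $c_{\alpb}\xb^{\alpb}/\lambda_{\alpb}$ with weights $\lambda_{\alpb}$. Since $\prod_{\alpb \in A^+}(\xb^{\alpb})^{\lambda_{\alpb}} = \xb^{\sum \lambda_{\alpb}\alpb} = \xb^{\betab}$ and $\prod_{\alpb \in A^+}(c_{\alpb}/\lambda_{\alpb})^{\lambda_{\alpb}} = \Theta_p$, this yields $\sum_{\alpb \in A^+} c_{\alpb}\xb^{\alpb} \ge \Theta_p\,\xb^{\betab}$ on the positive orthant. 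For general $\xb \in \R^n$ set $\Vector{y} = (|x_1|, \dots, |x_n|)$; because $\alpb \in (2\N)^n$ we have $c_{\alpb}\xb^{\alpb} = c_{\alpb}\Vector{y}^{\alpb}$, while $c_{\betab}\xb^{\betab} \ge -|c_{\betab}|\,\Vector{y}^{\betab}$. Combining these with the positive-orthant estimate applied to $\Vector{y}$ gives $p(\xb) \ge (\Theta_p - |c_{\betab}|)\,\Vector{y}^{\betab} \ge 0$, with boundary points handled by continuity. This proves nonnegativity.

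For necessity I would argue by contraposition: assuming $|c_{\betab}| > \Theta_p$, I construct a point at which $p$ is negative. The decisive step, and the one I expect to be the main obstacle, is to produce a point $\xb^\ast \in \R_{>0}^n$ at which the above AM/GM inequality is tight, i.e. at which $c_{\alpb}(\xb^\ast)^{\alpb}/\lambda_{\alpb}$ is independent of $\alpb$. Taking logarithms $y_i = \log x_i$, this amounts to the linear system $\langle \alpb - \alpb', \Vector{y}\rangle = (\log c_{\alpb'} - \log\lambda_{\alpb'}) - (\log c_{\alpb} - \log\lambda_{\alpb})$, and its solvability is exactly where the circuit hypothesis enters: since $A^+$ is a simplicial circuit, its elements are affinely independent, so the difference vectors $\{\alpb - \alpb_0 : \alpb \in A^+\}$ are linearly independent and the system has a solution $\Vector{y} \in \R^n$, yielding $\xb^\ast = (e^{y_1}, \dots, e^{y_n})$ with $\sum_{\alpb \in A^+} c_{\alpb}(\xb^\ast)^{\alpb} = \Theta_p\,(\xb^\ast)^{\betab}$. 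Finally I would adjust signs: if $c_{\betab} < 0$ and $\betab \in (2\N)^n$, evaluate directly at $\xb^\ast$; if $\betab \notin (2\N)^n$, then some $\beta_i$ is odd, so flipping the sign of $x_i$ leaves every $\xb^{\alpb}$ with $\alpb \in A^+$ unchanged while forcing $c_{\betab}\xb^{\betab} = -|c_{\betab}|\,(\xb^\ast)^{\betab}$. In either case $p$ evaluates to $(\Theta_p - |c_{\betab}|)(\xb^\ast)^{\betab} < 0$, contradicting nonnegativity and completing the proof.
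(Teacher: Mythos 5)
Your argument is correct and follows exactly the route this paper alludes to (and the cited source \cite{Iliman:deWolff:Circuits} uses): the paper states this theorem without proof, noting only that it is ``a consequence of the AM/GM inequality,'' and your proof fills that in faithfully --- weighted AM/GM with the barycentric weights $\lambda_{\alpb}$ for sufficiency, transported off the positive orthant via evenness of the outer exponents, and for necessity the tightness point obtained by solving the log-linear system, whose solvability is correctly traced to the affine independence of $A^+$. The sign-flip at an odd coordinate of $\betab$ and the continuity argument on the coordinate hyperplanes are both handled properly, so there is nothing to add.
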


To utilize the above results on circuit polynomials for polynomials with general support sets, we define the cone of \struc{sums of nonnegative circuit (SONC) polynomials}.

\begin{definition}[SONC Cone]
	The \struc{SONC cone} $\struc{\ASONC}$ is the subset of all nonnegative polynomials $p$ with finite support in $A \subset \N^n$, which admit a representation as sums of nonnegative circuit polynomials or monomial squares.
\label{def:sonccone}
\end{definition}

\begin{theorem}[\cite{deWolffPositivstellensatz}]
	Let $A \subseteq \N^n$ be some finite support set. 
	Then $\ASONC$ is a convex, full-dimensional cone in $\ANonnegCone$. 
	In general, it further holds that $\ASONC \not\subset \Sigma_A$.
\end{theorem}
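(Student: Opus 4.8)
The plan is to establish the three assertions separately: that $\ASONC$ is a convex cone, that it is full-dimensional inside $\ANonnegCone$, and that $\ASONC\not\subseteq\Sigma_A$ for a suitable support set $A$.

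\emph{Convex cone.} By \cref{def:sonccone}, $\ASONC$ is the set of all finite sums of nonnegative circuit polynomials and monomial squares supported on $A$, i.e. the conic hull of these generators, so closure under addition is immediate. For closure under multiplication by $\mu\in\R_{>0}$ I would observe that a monomial square stays a positive multiple of a monomial square, while for a nonnegative circuit polynomial $p=\sum_{\alpb\in A^+}c_{\alpb}\xb^{\alpb}+c_{\betab}\xb^{\betab}$ the barycentric coordinates $\Vector{\lambda}$ of $\betab$ are unchanged under scaling; since $\sum_{\alpb\in A^+}\lambda_{\alpb}=1$, the circuit number of \cref{thm:nonnegativecircuit} transforms as $\Theta_{\mu p}=\mu\,\Theta_p$, whence $|\mu c_{\betab}|=\mu|c_{\betab}|\le\mu\,\Theta_p=\Theta_{\mu p}$ and $\mu p$ is again a nonnegative circuit polynomial. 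Thus $\ASONC$ is closed under nonnegative combinations, which is exactly the claim that it is a convex cone, and $\ASONC\subseteq\ANonnegCone$ holds because each generator is nonnegative.

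\emph{Full-dimensionality.} Since $\ASONC$ is a convex cone inside $\ANonnegCone$, showing $\dim\ASONC=\dim\ANonnegCone$ reduces to producing a single $p^\ast\in\ASONC$ together with a relatively open ball around it (in the span of $\ANonnegCone$ inside $\polys\simeq\R^{\#A}$) contained in $\ASONC$. Here all elements of $\vertices{\conv(A)}$ must be even with positive coefficients, $\conv(A)$ is their convex hull, and each remaining $\betab\in A$ then lies in the strict interior of a simplex spanned by an even-vertex subset $A^+_{\betab}$. I would assemble $p^\ast=\sum_{\betab}q_{\betab}+\sum_{\alpb}r_{\alpb}\xb^{\alpb}$, where each $q_{\betab}$ is a nonnegative circuit polynomial with interior term $\betab$ satisfying the circuit inequality of \cref{thm:nonnegativecircuit} \emph{strictly}, and the $r_{\alpb}\xb^{\alpb}$ are monomial squares on the vertices providing extra positive slack. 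The main obstacle is then to verify that small perturbations of \emph{all} coefficients $c_{\alpb}$, $\alpb\in A$, keep the polynomial in $\ASONC$: a perturbation in a direction $\xb^{\betab}$ is absorbed because $|c_{\betab}|<\Theta_{q_{\betab}}$ is strict, and a perturbation of a vertex coefficient is absorbed by the slack $r_{\alpb}>0$, but one must redistribute these perturbations consistently across the several circuits that may share a vertex. Producing such a full-dimensional ball yields the claim.

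\emph{Non-containment.} For the last assertion it suffices to exhibit one support set with $\ASONC\not\subseteq\Sigma_A$, and I would take the support of the Motzkin polynomial
\[
	M(\xb)\ =\ x_1^4x_2^2+x_1^2x_2^4-3x_1^2x_2^2+1,
\]
that is, $A=\{(4,2),(2,4),(2,2),(0,0)\}$. Its Newton polytope is the triangle on the even vertices $(4,2),(2,4),(0,0)$, and $(2,2)$ is their barycenter, so $M$ is a circuit polynomial with $\Vector{\lambda}=(\tfrac13,\tfrac13,\tfrac13)$ and circuit number $\Theta_M=3$; since $|-3|\le3$, \cref{thm:nonnegativecircuit} gives $M\in\ASONC$. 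On the other hand $M$ is the classical example of a nonnegative polynomial that is not a sum of squares, so $M\notin\Sigma_A$, which proves $\ASONC\not\subseteq\Sigma_A$.
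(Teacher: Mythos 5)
First, note that the paper does not prove this statement at all: it is imported verbatim from \cite{deWolffPositivstellensatz} (see also \cite{Iliman:deWolff:Circuits}), so your argument has to stand on its own rather than be compared to an in-paper proof. Your first and third parts do stand: positive scaling multiplies the circuit number by the same factor precisely because the barycentric coordinates sum to one, so the generating set of \cref{def:sonccone} is closed under nonnegative combinations; and the Motzkin polynomial is a nonnegative circuit polynomial (its circuit number is $3 \ge |-3|$ by \cref{thm:nonnegativecircuit}) that is famously not a sum of squares, which settles $\ASONC \not\subseteq \Sigma_A$ for its support set. This is exactly the standard argument.

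The gap is in the full-dimensionality part, and it is twofold. First, you assert without justification that every non-vertex $\betab \in A$ lies in the strict interior of a simplex spanned by even points of $A$, and you implicitly assume that every vertex of $\conv(A)$ is even. Neither is automatic. When odd vertices occur, every nonnegative polynomial supported on $A$ must have zero coefficient there, these points must be peeled off (possibly iteratively, since new odd vertices can appear), and one must check that the dimension of $\ANonnegCone$ drops by exactly the number of directions your construction omits — otherwise the ball you build need not be full-dimensional \emph{in} $\ANonnegCone$, which is what the theorem claims. Second, you stop at the sentence that one "must redistribute these perturbations consistently across the several circuits that may share a vertex" without carrying this out; as written, the middle third of your proof is a plan rather than a proof. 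The step is in fact easy to close: perturb in the coefficient basis, absorb a perturbation $\epsilon$ of a vertex coefficient entirely into the monomial-square slack term $r_{\alpb}\xb^{\alpb}$ (which remains a monomial square whenever $|\epsilon| < r_{\alpb}$), and absorb a perturbation of an interior coefficient into the single circuit $q_{\betab}$ having that inner term, using the strictness of $|c_{\betab}| < \Theta_{q_{\betab}}$; no cross-circuit redistribution is needed. With those two points repaired, your argument would be a complete, self-contained proof of the cited result.
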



In polynomial optimization, we make use of the fact that being SONC implies being nonnegative by observing that 
\begin{align*}
	\max\{ \gamma \in \R^n \ : \ p(\xb) - \gamma \ge 0 \text{ for all } \xb \in \R^n\} \ \ge \ \max\{ \gamma \in \R^n \ : \ p(\xb) - \gamma \text{ is SONC}\}.
\end{align*}
I.e., we optimize over the SONC cone to find lower bounds to our initial polynomial optimization problem.
The circuit-based approach to SONC optimization can be carried out via \struc{geometric programming}, see, e.g., \cite{Iliman:deWolff:FirstGP}.
In the context of polynomial optimization, we are usually given information about the coefficient vector. 
Therefore it makes sense to assume a fixed sign distribution for the coefficient vector.
Let $p \in \R^A$ be a polynomial of the form $p(\xb) = \sum_{\alpb \in A} c_{\alpb} \xb^{\alpb}$. 
Then we write the support of $p$ as $A = A^+ \cup A^-$, where $A^+$ is defined as before, that is, as the set of even exponent vectors with positive corresponding coefficients, and $\struc{A^-} = A \setminus A^+$.
In the case of the SONC cone we indicate this by writing $\struc{\signedSONC}$. 
This kind of separation into a \struc{positive and negative support} is applied often when approaching polynomial optimization problems, see e.g. \cite{Dressler:Iliman:deWolff:FirstConstrained, Iliman:deWolff:FirstGP, Murray:Chandrasekaran:Wierman:NewtonPolytopes, Murray:Chandrasekaran:Wierman:SigOptREP}.

In this paper we make use of an alternative characterization of the SONC cone, which is independent of an explicit decomposition into circuits, and was first introduced by Chandrasekaran and Shah in \cite{Chandrasekaran:Shah:SAGE-REP} under the name \struc{sums of arithmetic-geometric exponentials (SAGE)}.
For equal support sets, the SONC and SAGE cones are equivalent, see \cite{Wang:supports,Murray:Chandrasekaran:Wierman:NewtonPolytopes}; see also \cite{Forsgaard:deWolff:BoundarySONCCone}. 
In the SAGE language, results are stated for the case of \struc{exponential sums} $f(\xb) = \sum_{\alpb \in A} c_{\alpb} \exp(\xb\T \alpb)$.
If we require $A \subset \N^n$ we can recover polynomials on the positive orthant by using the variable transformation $x_i \mapsto \ln(x_i)$ for all $i \in [n]$, see also \cite{Iliman:deWolff:Circuits}.
%
%
Using results from the SAGE literature, we obtain the following characterization of the SONC cone in the language of polynomials. 

\begin{theorem}[{Essentially \cite[Lemma 2]{Chandrasekaran:Shah:SAGE-REP} and \cite[Theorem 3]{Murray:Chandrasekaran:Wierman:NewtonPolytopes}}]
	Let $A = A^+ \cup A^- \subset \N^n$ be a finite, nonempty support set.
	Then a polynomial $p(\xb) = \sum_{\alpb \in A^+} c_{\alpb} \xb^{\alpb} + \sum_{\betab \in A^-} c_{\betab} \xb^{\betab}$ is contained in $\signedSONC$ if and only if for every $\betab \in A^-$ there exist $\Vector{c}^{(\betab)}$ and $\Vector{v}^{(\betab)}$ in $\R^{A^+}_{\ge 0}$ such that
	\setlength{\abovedisplayskip}{0pt}
	\setlength{\belowdisplayskip}{0pt}
	\begin{align*}~
		\begin{aligned}~
			\sum_{\betab \in A^-} c_{\alpb}^{(\betab)} &\ = \ c_{\alpb} &\text{ for all } \ \alpb \in A^+ , & \\
		D(\Vector{v}^{(\betab)}, e \cdot \Vector{c}^{(\betab)}) &\ \le \ c_{\betab} &\text{ for all } \ \betab \in A^-, &\text{ and } \\
		\sum_{\alpb \in A^+} v_{\alpb}^{(\betab)} (\alpb - \betab) &\ = \ 0  &\text{ for all } \ \betab \in A^-, &
		\end{aligned}
	\end{align*}
	where $\struc{D}: \polys_{\ge 0} \times \polys_{\ge 0} \rightarrow \R$ is the \struc{relative entropy function}
		$D(\Vector{v}, \Vector{c}) \ = \ \sum_{\alpb \in A} v_{\alpb} \ln \left(\frac{v_{\alpb}}{c_{\alpb}}\right)$.
	\label{thm:sagecontainment}
\end{theorem}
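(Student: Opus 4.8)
The plan is to deduce the statement from the established characterization of the SAGE cone of exponential sums and then to translate it back into the language of polynomials, so that the two cited results supply the two essential ingredients. First I would pass from polynomials to exponential sums via the substitution $x_i \mapsto \ln(x_i)$ discussed above: a polynomial $p$ with support $A$, restricted to $\R^n_{>0}$, corresponds to the exponential sum $f(\Vector{y}) = \sum_{\alpb \in A} c_{\alpb}\exp(\langle \alpb, \Vector{y}\rangle)$, and since every $\alpb \in A^+$ is an even exponent with positive coefficient, nonnegativity of $p$ on $\R^n$ is equivalent to nonnegativity of $f$ on $\R^n$. Because $A$ is fixed, membership of $p$ in $\signedSONC$ is then equivalent to membership of $f$ in the corresponding SAGE cone, using the equivalence of SONC and SAGE for equal support sets.

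Next I would use the structure of the SAGE cone to decompose $f$ into building blocks each carrying a single negative term. Concretely, $f$ lies in the SAGE cone if and only if it can be written as a sum $f = \sum_{\betab \in A^-} f^{(\betab)}$ of nonnegative AGE functions, where each $f^{(\betab)}$ is supported on $A^+ \cup \{\betab\}$, carries the single term $c_{\betab}\exp(\langle \betab, \Vector{y}\rangle)$, and has nonnegative coefficients $\Vector{c}^{(\betab)} \in \R^{A^+}_{\ge 0}$ on $A^+$. Comparing coefficients on each $\alpb \in A^+$ across the summands produces exactly the linear splitting condition $\sum_{\betab \in A^-} c_{\alpb}^{(\betab)} = c_{\alpb}$.

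It then remains to certify nonnegativity of a single AGE summand $f^{(\betab)}$. Dividing by $\exp(\langle\betab,\Vector{y}\rangle)$ reduces this to bounding the infimum of the posynomial $\sum_{\alpb\in A^+}c_{\alpb}^{(\betab)}\exp(\langle\alpb - \betab,\Vector{y}\rangle)$ from below by $-c_{\betab}$. By convex (Fenchel) duality this infimum equals $-\min D(\Vector{v}^{(\betab)}, e\cdot\Vector{c}^{(\betab)})$, where the minimum is taken over $\Vector{v}^{(\betab)} \in \R^{A^+}_{\ge 0}$ subject to $\sum_{\alpb\in A^+}v_{\alpb}^{(\betab)}(\alpb - \betab) = 0$. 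Hence $f^{(\betab)}$ is nonnegative exactly when such a $\Vector{v}^{(\betab)}$ exists with $D(\Vector{v}^{(\betab)}, e\cdot\Vector{c}^{(\betab)}) \le c_{\betab}$, which reproduces the remaining two conditions of the statement; assembling the three families of conditions over all $\betab \in A^-$ then yields the asserted equivalence in both directions. As a consistency check, minimizing $D$ over the scaling of $\Vector{v}^{(\betab)}$ recovers $-\Theta_p$, so in the circuit case this certificate agrees with \cref{thm:nonnegativecircuit}.

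I expect the main obstacle to be the decomposition in the second step: showing that \emph{every} SAGE (equivalently, SONC) representation of $f$ can be rearranged so that each AGE summand carries exactly one negative term and is supported on $A^+\cup\{\betab\}$. For equal support sets this reduction to an ordinary SAGE decomposition is precisely the content of the cited theorem, and the delicate point is verifying that restricting the individual summands to these supports loses no generality. In contrast, the converse implication and the single-AGE duality computation of the third step are routine once this structural reduction is available.
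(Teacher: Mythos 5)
The paper gives no proof of this theorem---it is quoted from the SAGE literature---and your sketch is a faithful reconstruction of the standard argument behind the two cited results: pass to exponential sums, decompose into AGE summands each carrying a single negative term with positive support restricted to $A^+$ (the support-restriction being exactly the cited Theorem~3 of Murray--Chandrasekaran--Wierman), and certify each summand via the relative-entropy/Fenchel-duality computation (the cited Lemma~2 of Chandrasekaran--Shah). The one point you gloss over, which the paper's statement also elides in translating from signomials to polynomials, is that an exponent $\betab \in A^-$ with an odd coordinate may carry a \emph{positive} coefficient $c_{\betab}$, in which case the monomial $c_{\betab}\xb^{\betab}$ still takes negative values on $\R^n$ and the correct certificate is $D(\Vector{v}^{(\betab)}, e\cdot\Vector{c}^{(\betab)}) \le -|c_{\betab}|$ rather than $\le c_{\betab}$; your asserted equivalence of nonnegativity of $p$ on $\R^n$ with nonnegativity of $f$ is where this adjustment has to be made.
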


If we use the SAGE based approach for polynomial optimization, then this requires solving a \struc{relative entropy program}, see \cite{Chandrasekaran:Shah:SAGE-REP}.

\begin{definition}[Relative Entropy Program]
	An optimization problem is called a \struc{relative entropy program (REP)}, if it is of the form
	\begin{align*}
		\min \qquad &f(\xb) \\
		\text{subject to } \qquad &g_i(\xb) \ \ge \ 0 \qquad \text{ for all } \ i \in [m], \\
		&\xb \in \cR_n,
	\end{align*}
	where $f, g_1, \ldots, g_m$ are linear functions and 
	\begin{align*}
		\struc{\cR_n} = \set{(\yb, \zb, \Vector{t}) \in \R^n \times \R^n \times \R^n \ : \ y_j \ln \left( \frac{y_j}{z_j}\right) \le t_j \text{ for all } j \in [n]}
	\end{align*}
	denotes the \struc{relative entropy cone}.
\end{definition}

Finally, we also point out that various results in the realm of SONC and SAGE, e.g. \cref{thm:nonnegativecircuit}, were shown independently by Craciun, Pantea, and Koeppl already in 2012 in the context of chemical reaction networks \cite{pantea-jac}.

\subsubsection{The DSONC Cone}
\label{sec:DualSONC}

In this article, we focus on the \emph{dual} of the SONC cone, and the closely related \struc{DSONC cone} as a secondary certificate of nonnegativity.
The dual of the SONC cone has been studied in e.g. \cite{Dressler:Heuer:Naumann:deWolff:DualSONCLP,Katthaen:Naumann:Theobald:UnifiedFramework}, and the DSONC cone was recently introduced in \cite{Heuer:deWolff:DualityOfSONC}. 
We only give a brief overview here, for more details we refer the interested reader to \cite{Heuer:deWolff:DualityOfSONC}. \\

The DSONC cone not only yields a certificate of nonnegativity, it also has the added advantage that optimizing over this cone can be done via \emph{linear} programming, see \cref{prop:dualfeasibilty}.
In order to define the DSONC cone, we 
use the \struc{natural duality pairing}
\begin{align*}
	\struc{\Vector{v}(p)} \ = \ \sum_{\alpb \in A^+} v_{\alpb} c_{\alpb} +\sum_{\betab \in A^-} v_{\betab} c_{\betab} \ \in \ \R,	
\end{align*}
where $\Vector{v}(\cdot) \in \dualpolys$ and $p(\xb) = \sum_{\alpb \in A^+} c_{\alpb} \xb^{\alpb} + \sum_{\betab \in A^-} c_{\betab} \xb^{\betab} \in \polys$. 
With this and the usual definition of dual cones, the dual SONC cone is the set 
\begin{align*}
	\dualsignedSONC = \set{ \Vector{v} \in \dualpolys \ : \  \Vector{v}(p) \ge 0 \text{ for all } p \in \signedSONC}.
\end{align*}


To associate polynomials $p \in \polys$, where $A = A^+ \cup \set{\betab}$, with elements $\Vector{v} \in \dualcircuitSONC$ we consider
\begin{align}
	p(\xb) \ = \ \sum_{\alpb \in A^+} v_{\alpb} \xb^{\alpb} + v_{\betab} \xb^{\betab}.
\label{eq:identification}
\end{align}
Every polynomial of this form is also contained in the SONC cone $\circuitSONC$ and thus nonnegative, see \cite{Dressler:Heuer:Naumann:deWolff:DualSONCLP}. 
Analogously to the construction of the primal SONC cone $\signedSONC$, we now define the following generalization to arbitrary support sets. 

\begin{definition}[DSONC Cone]
	Let $A = A^+ \cup A^-$ be an arbitrary fixed support set. 
	Then the Minkowski sum
	\begin{align*}
		\struc{\signedDSONC} \ = \ \sum_{\betab \in A^-} \set{ \sum_{\alpb \in A^+} c_{\alpb} \xb^{\alpb} + c_{\betab} \xb^{\betab} \ : \ \Vector{c} \in \dualcircuitSONC}.
	\end{align*}
	is called the \struc{(signed) DSONC cone}. 
	If we do not fix the sign distribution, then we write 
	$\struc{\ADSONC}$.
\end{definition}

We point out that this cone can trivially be generalized to non-fixed support sets as well by writing
\begin{align}
	\struc{\DSONC} \ = \ \set{ p \in \R[\xb] \ : \ \text{ there exists } A \subset \N^n \ \text{ such that } \ p \in \ADSONC}. \label{Equation:DSONCCone}
\end{align}
The following feasibility problem verifies containment in $\dualcircuitSONC$ and, by extension, the DSONC cone $\circuitDSONC$.

\begin{proposition}[\cite{Dressler:Heuer:Naumann:deWolff:DualSONCLP}]
		For some fixed support set $A = A^+ \cup \set{\betab} \subset \N^n$, let $\Vector{v} \in \dualpolys$ and $v_{\alpb} \ge 0$ for every $\alpb \in \vertices{\conv(A)}$.
		The linear feasibility program
		\begin{align}
			\ln\left(\frac{|v_{\betab}|}{v_{\alpb}}\right) \ \le \ (\alpb - \betab)\T \Vector{\tau}^{(\betab)} \;\text{ for all } \; {\alpb} \in A^+ 
		\label{eq:linprg}
		\end{align}
		in the variable $\Vector{\tau}$ verifies containment in $\dualcircuitSONC$. 
	\label{prop:dualfeasibilty}
\end{proposition}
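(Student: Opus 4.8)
The plan is to first give an explicit inner description of the single-circuit dual cone $\dualcircuitSONC$, and then to show that feasibility of the program \cref{eq:linprg} is equivalent to membership in it. For the first part I would use that, by \cref{def:circuitpolynomial} and \cref{thm:nonnegativecircuit}, the cone $\circuitSONC$ on the support $A = A^+ \cup \set{\betab}$ is the conic hull of the monomial squares $\xb^{\alpb}$ ($\alpb \in A^+$) together with the nonnegative circuit polynomials $p(\xb) = \sum_{\alpb \in A^+} c_{\alpb}\xb^{\alpb} + c_{\betab}\xb^{\betab}$ with $c_{\alpb} > 0$ and $|c_{\betab}| \le \Theta_p$. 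Since $\Vector{v}$ lies in the dual cone exactly when $\Vector{v}(p) \ge 0$ on these generators, pairing against $\xb^{\alpb}$ forces $v_{\alpb} \ge 0$ (the stated hypothesis), and pairing against a circuit polynomial, minimized over the admissible sign of $c_{\betab}$, reduces the defining inequality to $|v_{\betab}|\,\Theta_p \le \sum_{\alpb \in A^+} v_{\alpb} c_{\alpb}$ for every $c_{\alpb} > 0$. The key step is then the infimum
\[
	\inf_{c_{\alpb} > 0} \frac{\sum_{\alpb \in A^+} v_{\alpb} c_{\alpb}}{\prod_{\alpb \in A^+} (c_{\alpb}/\lambda_{\alpb})^{\lambda_{\alpb}}} \ = \ \prod_{\alpb \in A^+} v_{\alpb}^{\lambda_{\alpb}},
\]
which follows from the weighted AM/GM inequality with the weights $\lambda_{\alpb}$ (recall $\sum_{\alpb} \lambda_{\alpb} = 1$), the equality case pinning down the minimizer. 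This yields $\dualcircuitSONC = \set{\Vector{v} : v_{\alpb} \ge 0 \text{ for all } \alpb \in A^+, \ |v_{\betab}| \le \prod_{\alpb \in A^+} v_{\alpb}^{\lambda_{\alpb}}}$, i.e.\ the logarithmic condition $\ln|v_{\betab}| \le \sum_{\alpb \in A^+} \lambda_{\alpb} \ln v_{\alpb}$.

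It remains to match this against \cref{eq:linprg}. For the forward direction, given a feasible $\Vector{\tau}$ I would multiply the $\alpb$-th constraint by $\lambda_{\alpb} > 0$ and sum: since $\sum_{\alpb} \lambda_{\alpb} = 1$ and $\sum_{\alpb} \lambda_{\alpb}(\alpb - \betab) = 0$ (the defining relation of the barycentric coordinates), the right-hand side collapses to $0$ while the left-hand side becomes $\ln|v_{\betab}| - \sum_{\alpb}\lambda_{\alpb}\ln v_{\alpb}$, which is exactly the cone condition. For the converse I would apply a Farkas/Gale argument: the system \cref{eq:linprg} in $\Vector{\tau}$ is infeasible precisely when some $\Vector{\mu} \ge 0$ satisfies $\sum_{\alpb}\mu_{\alpb}(\alpb - \betab) = 0$ while $\sum_{\alpb}\mu_{\alpb}(\ln|v_{\betab}| - \ln v_{\alpb}) > 0$. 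Because $A$ is a simplicial circuit, the vectors $\set{\alpb - \betab}_{\alpb \in A^+}$ have a one-dimensional space of linear dependences, spanned by $\Vector{\lambda}$; as $\Vector{\lambda} > 0$, every admissible $\Vector{\mu}$ is a nonnegative multiple of $\Vector{\lambda}$, so the obstruction exists if and only if $\ln|v_{\betab}| - \sum_{\alpb}\lambda_{\alpb}\ln v_{\alpb} > 0$, i.e.\ exactly when the cone condition fails. This gives feasibility of \cref{eq:linprg} $\iff \Vector{v} \in \dualcircuitSONC$.

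The main obstacle I expect lies in the converse direction: one must verify carefully that the circuit property collapses the nonnegative linear dependences of $\set{\alpb - \betab}_{\alpb \in A^+}$ to the single ray spanned by $\Vector{\lambda}$, and handle the degenerate cases in which some $v_{\alpb} = 0$. In the latter situation the geometric mean $\prod_{\alpb} v_{\alpb}^{\lambda_{\alpb}}$ vanishes and the logarithms must be read as limits, so the cone condition forces $v_{\betab} = 0$; correspondingly the program \cref{eq:linprg} is infeasible whenever $v_{\betab} \neq 0$, and one should check that both descriptions agree on this boundary stratum. By comparison the AM/GM computation is routine once the equality case is identified and attainment for $v_{\alpb} > 0$ is confirmed.
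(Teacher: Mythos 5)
The paper does not actually prove \cref{prop:dualfeasibilty}; it is imported verbatim from \cite{Dressler:Heuer:Naumann:deWolff:DualSONCLP}, so there is no in-paper argument to compare yours against. Judged on its own, your reconstruction is sound and is essentially the standard route: dualize the single-circuit cone via the weighted AM/GM computation to get the description $v_{\alpb}\ge 0$, $|v_{\betab}|\le \prod_{\alpb\in A^+} v_{\alpb}^{\lambda_{\alpb}}$, then translate that power-cone condition into feasibility of \cref{eq:linprg}. Your forward direction (multiply the $\alpb$-th constraint by $\lambda_{\alpb}$ and sum, using $\sum_{\alpb}\lambda_{\alpb}=1$ and $\sum_{\alpb}\lambda_{\alpb}(\alpb-\betab)=\ob$) is exactly what the proposition needs, since ``verifies containment'' only asserts that feasibility of \cref{eq:linprg} implies $\Vector{v}\in\dualcircuitSONC$. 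The Gale/Farkas argument for the converse is also correct as linear algebra: because $A^+$ is affinely independent and $\betab$ lies in the interior of the simplex, the nonnegative dependences of $\set{\alpb-\betab}_{\alpb\in A^+}$ do form the single ray $\R_{\ge 0}\Vector{\lambda}$.

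One caution: the final ``$\iff$'' overreaches in the case $\betab\in(2\N)^n$. There the primal cone $\circuitSONC$ also contains the monomial square $\xb^{\betab}$ (condition (1) of \cref{thm:nonnegativecircuit}, resp.\ the monomial-square generators in \cref{def:sonccone}), or, in the signed convention, the admissible $c_{\betab}$ no longer range symmetrically over $[-\Theta_p,\Theta_p]$. Either way the dual description becomes one-sided in $v_{\betab}$ rather than the two-sided condition $|v_{\betab}|\le\prod_{\alpb}v_{\alpb}^{\lambda_{\alpb}}$, so \cref{eq:linprg} remains a \emph{sufficient} certificate but is no longer equivalent to membership. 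This does not affect the correctness of your proof of the stated proposition (which only claims sufficiency), but you should either restrict the equivalence claim to $\betab\notin(2\N)^n$ or drop it. Your treatment of the boundary stratum $v_{\alpb}=0$ is the right thing to flag and is handled consistently.
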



%

The feasibility problem \cref{prop:dualfeasibilty} forms the basis for our DSONC approach to Lyapunov stability analysis in \cref{sec:StabilityDSONC}.

\section{Polynomial Optimization for Lyapunov Analysis}
\label{sec:PolyLyap}

We now illustrate how polynomial optimization may be utilized to create a sufficient condition for stability. 
For the special case of SOS, this has been described in \cite{ParriloThesis,PapachristodoulouAdvancesSOS,Ahmadi:Parrilo:Stability}, among others.
In order to use polynomial optimization as defined in \cref{def:cpop} to tackle this issue, 
we restrict ourselves to the unconstrained case. 
The following theorem provides a relaxation of \cref{thm:lyapunovstability} using an arbitrary nonnegativity certificate $\struc{\mathbb{GEN}}$ and the domain $\cU = \R^n$.


\begin{theorem}
	\label{thm:gen_stability}
	Let $p_1, p_2 \in \R[\xb]$ be strictly positive on $\R^n \setminus \set{0}$.
	The equilibrium $\xb^\ast = \ob$ of the system \cref{eq:nonlinearsystem} is stable, if there exists a polynomial function $V : \R^n \rightarrow \R$ which satisfies
	\begin{align*}
		V(\ob) \ &= \ 0, \\
		V(\xb) - p_1(\xb) \ &\in \ \mathbb{GEN} \qquad \text{ for all } \xb \in \R^n \setminus \{ \ob \}, \\
		- \dot{V}(\xb) \ &\in \ \mathbb{GEN} \qquad \text{ for all } \xb \in \R^n.
	\end{align*}
	If furthermore
	\begin{align*}
		-\dot{V}(\xb) - p_2(\xb) \ \in \ \mathbb{GEN} \qquad \text{ for all } \xb \in \R^n \setminus \{ \ob \},
	\end{align*}
	then $\xb^\ast$ is asymptotically stable.
\end{theorem}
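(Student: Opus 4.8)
The plan is to reduce the statement directly to Lyapunov's Direct Method, \cref{thm:lyapunovstability}, applied on the domain $\cU = \R^n$. The only work is to check that each hypothesis of that theorem follows from the assumptions here, using the single defining feature of a nonnegativity certificate, namely that membership in $\mathbb{GEN}$ forces nonnegativity of the corresponding polynomial as a function on $\R^n$; abstractly this is the inclusion $\mathbb{GEN} \subseteq \ANonnegCone$ for the relevant support. Everything then reduces to translating cone membership into pointwise inequalities and feeding them into the classical theorem.

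First I would dispose of the regularity requirement: $V$ is assumed to be a polynomial, hence smooth and in particular continuously differentiable, and $V(\ob) = 0$ is given outright. Next I would read the membership $V - p_1 \in \mathbb{GEN}$ as a global statement about the polynomial $V - p_1$ (the appended ``for all $\xb$'' being only an evaluation reminder, since cone membership is not a pointwise property), so that $V(\xb) - p_1(\xb) \ge 0$ for every $\xb \in \R^n$, i.e.\ $V(\xb) \ge p_1(\xb)$. Since $p_1$ is strictly positive on $\R^n \setminus \set{\ob}$, this upgrades immediately to $V(\xb) > 0$ for all $\xb \ne \ob$, which is the positive-definiteness condition. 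In the same way, $-\dot{V} \in \mathbb{GEN}$ yields $-\dot{V}(\xb) \ge 0$, hence $\dot{V}(\xb) \le 0$ on all of $\R^n$, the required decay condition. With these three conditions in hand, \cref{thm:lyapunovstability} delivers stability of $\xb^\ast = \ob$.

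For the asymptotic part I would treat the extra hypothesis identically: $-\dot{V} - p_2 \in \mathbb{GEN}$ forces $-\dot{V}(\xb) \ge p_2(\xb)$ for all $\xb \in \R^n$, and strict positivity of $p_2$ away from the origin then gives $-\dot{V}(\xb) > 0$, i.e.\ $\dot{V}(\xb) < 0$ for all $\xb \ne \ob$. This is exactly the strict-decay hypothesis in the second part of \cref{thm:lyapunovstability}, so asymptotic stability follows. Note that because the conclusion is the local notion of \cref{def:stability}, no radial-unboundedness or properness of $V$ is needed, even though we take $\cU = \R^n$.

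I do not expect a genuine obstacle here; the mathematical content is entirely the bridge between cone membership and the pointwise inequalities $V \ge p_1 > 0$ and $-\dot{V} \ge p_2 > 0$. The one point demanding care is the precise meaning of ``$q \in \mathbb{GEN}$'': it is a property of the polynomial $q$ rather than of a point $\xb$, and the argument relies solely on the abstract fact that $\mathbb{GEN}$ is a certificate of nonnegativity, so that each such $q$ is nonnegative everywhere. Once that is made explicit, the proof is a short verification of the hypotheses of the classical theorem.
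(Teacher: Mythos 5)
Your proof is correct and follows exactly the same route as the paper's: translate each $\mathbb{GEN}$-membership into a pointwise inequality via the defining property of a nonnegativity certificate ($V \ge p_1 > 0$ off the origin, $\dot{V} \le 0$, and $-\dot{V} \ge p_2 > 0$ off the origin) and then invoke \cref{thm:lyapunovstability}. Your added remark clarifying that cone membership is a property of the polynomial rather than of a point is a welcome precision the paper leaves implicit, but the argument is the same.
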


\begin{proof}
	Let $V$ be polynomial functions satisfying the conditions above.
	In order to prove that this implies (asymptotic) stability, we need to check that the conditions given in \cref{thm:lyapunovstability} are satisfied. 
	Since $V$ is a polynomial function, it is continuously differentiable. 
	The condition $V(\xb) - p_1(\xb) \in \mathbb{GEN}$ for some certificate of nonnegativity $\mathbb{GEN}$ implies that 
	\begin{align*}
		V(\xb) \ \ge \ p_1(\xb) \ > \ 0 \qquad \text{ for all } \xb \in \R^n \setminus \{ \ob \},
	\end{align*}
	and if $- \dot{V}(\xb) \in \mathbb{GEN}$ holds, then $\dot{V}(\xb) \le 0$ for all $\xb \in \R^n$.
	By \cref{thm:lyapunovstability}, stability follows. 
	Further, $-\dot{V}(\xb) - p_2(\xb) \ \in \ \mathbb{GEN}$ implies
	\begin{align*}
		-\dot{V}(\xb) \ \ge \ p_2(\xb) \ > \ 0 \qquad \text{ for all } \xb \in \R^n \setminus \{ \ob \}.
	\end{align*}
	So asymptotic stability follows by \cref{thm:lyapunovstability}.
\end{proof}

In fact, for the case $\mathbb{GEN} = \DSONC$, see \cref{Equation:DSONCCone}, with equilibrium $\xb^\ast = \ob$, \cref{thm:gen_stability} can be simplified.
Polynomials in the DSONC cone have no real zeros outside the origin, with the trivial exception of the zero polynomial, see \cite{Heuer:deWolff:DualityOfSONC}.
Thus, we may omit subtracting positive polynomials, and obtain the following corollary.

\begin{corollary}
	The equilibrium $\xb^\ast = \ob$ of the system \cref{eq:nonlinearsystem} is asymptotically stable, if there exists a polynomial function $V : \R^n \rightarrow \R$ which satisfies that $V$ is not the zero polynomial and
	\begin{align*}
		V(\ob) \ & = \ 0, \\
		V(\xb) \ &\in \ \DSONC \qquad \text{ for all } \xb \in \R^n, \\
		- \dot{V}(\xb) \ &\in \ \DSONC \qquad \text{ for all } \xb \in \R^n.
	\end{align*}
\label{thm:dsonc_stability}
\end{corollary}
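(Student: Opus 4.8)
The plan is to specialize \cref{thm:gen_stability} by exploiting the fact that membership in $\DSONC$ does more than certify nonnegativity. By the cited structural property of the DSONC cone --- every nonzero element has no real zeros outside the origin --- $\DSONC$-membership upgrades nonnegativity to strict positivity on $\R^n \setminus \{\ob\}$. This is exactly what permits dropping the auxiliary strictly positive polynomials $p_1, p_2$ appearing in \cref{thm:gen_stability}: since those must be \emph{strictly} positive and so cannot simply be set to zero there, the cleanest route is to verify the hypotheses of Lyapunov's direct method, \cref{thm:lyapunovstability}, with $\cU = \R^n$, directly.

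First I would record the two cheap facts: $V$ is continuously differentiable, being a polynomial, and $V(\ob) = 0$ by hypothesis. Next I would establish that $V$ is positive definite. Since $\DSONC$ is a cone of nonnegative polynomials we have $V(\xb) \ge 0$ everywhere, and since $V$ is not the zero polynomial the cited zero-set property forces $V(\xb) \neq 0$ for every $\xb \neq \ob$; hence $V(\xb) > 0$ on $\R^n \setminus \{\ob\}$. Applying the same nonnegativity inclusion to $-\dot{V} \in \DSONC$ yields $-\dot{V}(\xb) \ge 0$, i.e. $\dot{V}(\xb) \le 0$ for all $\xb \in \R^n$. These are precisely the three stability hypotheses of \cref{thm:lyapunovstability}, so stability follows at once.

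For the asymptotic conclusion I would invoke the zero-set property a second time, now for $-\dot{V}$: provided $-\dot{V}$ is itself not the zero polynomial, it has no zeros off the origin, so $-\dot{V}(\xb) > 0$, i.e. $\dot{V}(\xb) < 0$, for all $\xb \neq \ob$, which is the extra strict-decrease hypothesis of \cref{thm:lyapunovstability}. Note that $\dot{V}(\ob) = 0$ automatically, since $\Vector{f}(\ob) = \ob$ at the equilibrium, so the picture $\dot{V} < 0$ off the origin together with $\dot{V}(\ob) = 0$ is consistent.

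The main obstacle is this last edge case. The hypotheses exclude only $V \equiv 0$, not $-\dot{V} \equiv 0$, and $-\dot{V}$ can legitimately vanish identically --- for instance on conservative systems whose trajectories remain on level sets of $V$ --- in which case one obtains only stability rather than asymptotic stability. I would therefore add the standing assumption that $-\dot{V}$ is also not the zero polynomial, or state the asymptotic part explicitly under that proviso. Beyond this bookkeeping the argument is entirely routine once the structural property of $\DSONC$ is in hand; indeed the whole force of the corollary is that this property replaces the two positivity margins $p_1, p_2$ of \cref{thm:gen_stability} for free.
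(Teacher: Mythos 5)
Your argument is correct and rests on exactly the same key fact as the paper's one-line proof, namely that every nonzero element of $\DSONC$ is strictly positive on $\R^n \setminus \{\ob\}$ (\cite[Corollary 4.11]{Heuer:deWolff:DualityOfSONC}). Where you differ is in the bookkeeping, and your version is the more careful one. First, the paper cites \cref{thm:gen_stability} directly, even though that theorem requires $p_1, p_2$ to be strictly positive and so cannot literally be applied with $p_1 = p_2 = 0$; your choice to go back to \cref{thm:lyapunovstability} with $\cU = \R^n$ sidesteps this and is the cleaner route. Second, and more substantively, your observation about the edge case $-\dot{V} \equiv 0$ identifies a genuine defect in the \emph{statement} of the corollary rather than in your proof: the hypotheses exclude $V$ being the zero polynomial but not $-\dot{V}$, and the zero polynomial does lie in $\DSONC$, so the stated conditions can hold for a system that is merely stable. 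The paper's own linearized pendulum example ($V = \tfrac12 x_1^2 + \tfrac12 x_2^2$, $-\dot{V} \equiv 0$) is precisely such a case. Your proposed additional hypothesis that $-\dot{V}$ is not the zero polynomial is the right repair, and with it your proof is complete.
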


\begin{proof}
	This is a direct consequence of the fact that containment in the DSONC cone $\cD$ implies positivity, see \cite[Corollary 4.11]{Heuer:deWolff:DualityOfSONC}, and \cref{thm:gen_stability}.
\end{proof}


Our ultimate goal is to provide a (D)SONC-based procedure for finding Lyapunov functions $V$. 
For now, we assume that we already have a candidate function $V$ at hand and just want to test whether $V$ is a Lyapunov function by using \cref{thm:gen_stability} and \cref{thm:dsonc_stability}. 
We call such a function $V$ a \struc{Lyapunov candidate}.

\begin{example}
	Consider the polynomial system
	\begin{align}
		\begin{split}
			\dot{x_1} \ &= \ -x_1^3 - x_1 - x_1x_3^2, \\ 
			\dot{x_2} \ &= \ x_1^2 - x_2, \\
			\dot{x_3} \ &= \ -x_3,
		\end{split}
	\label{eq:global_stability1}
	\end{align}
	which has a unique equilibrium in the origin.
	We want to use \cref{thm:gen_stability} for different nonnegativity certificates $\mathbb{GEN}$ to show that \cref{eq:global_stability1} is asymptotically stable.
	For this, we consider the Lyapunov candidate $V(\xb) = x_1^2+x_2^2+x_3^2$.
	This function is positive on $\R^n \setminus \set{\ob}$. 
	It is also a sum of monomial squares and can as such trivially be written as a sum of squares of other polynomials.
	It is furthermore trivially contained in the SONC and DSONC cones.

	The negative derivative of this Lyapunov candidate is
	\begin{align*}
		-\dot{V}(\xb) \ &= \ - \left(\frac{\partial V}{\partial \xb}\right)\T \dot{\xb} \\
		\ &= \ 2x_1^4 + 2x_1^2x_3^2 - 2x_1^2x_2 + 2x_1^2 + 2x_2^2 + 2 x_3^2.
	\end{align*}
	If we can show that $-\dot{V}(\xb) \in \mathbb{GEN}$ for any of the following different choices of $\mathbb{GEN}$, then the equilibrium of \cref{eq:global_stability1} is (asymptotically) stable.
	\begin{enumerate}[topsep=5pt]
		\item First, consider the case where $\mathbb{GEN}$ denotes the SOS cone.
		We can easily verify that $-\dot{V}$ can be written as
		\begin{align*}
			-\dot{V}(\xb) \ = \ &\left(-\frac{1}{2}\sqrt{6}x_1^2 + \frac{1}{2}\sqrt{6}x_2 \right)^2 + \left(\sqrt{2}x_3 \right)^2 + \left(\sqrt{2}x_1 \right)^2 \\
			&+ \left(\sqrt{2}x_1x_3 \right)^2 + \left(\frac{1}{2}\sqrt{2}x_1^2 + \frac{1}{2}\sqrt{2}x_2 \right)^2,
		\end{align*}
		and is thus a sum of squared polynomials.
		By \cref{thm:gen_stability}, \cref{eq:global_stability1} is stable in its equilibrium $\xb^\ast = \ob$.

		In fact, we can even show \emph{asymptotic} stability.
		For this consider $P(\xb) = -\dot{V}(\xb) - p_2(\xb)$, where $p_2$ can be chosen as, e.g., $p_2(\xb) =  x_1^2 + x_2^2$.
		It can be shown that $P$ is still SOS, and thus asymptotic stability follows.
		\item Let now $\mathbb{GEN}$ be the SONC cone.
		To verify that $-\dot{V}$ can be written as a sum of nonnegative circuit polynomials, consider the representation
		\begin{align*}
			- \dot{V}(\xb) \ = \ \underbrace{\left(2x_2^2 - 2x_1^2x_2 + 2x_1^4 \right)}_{= c(x_1, x_2)} + 2x_1^2x_3^2 + 2x_1^2  + 2 x_3^2.
		\end{align*}
		If we can show that $c(x_1, x_2)$ is a nonnegative circuit polynomial, then the stability of the system \cref{eq:global_stability1} follows.
		Note first that for the support $\set{\Matrix{4\\0}, \Matrix{0\\2}, \Matrix{2\\1}}$ of $c$ we have 
		\begin{align*}
			\Matrix{2\\1} \ = \ \frac{1}{2} \Matrix{4\\0} + \frac{1}{2} \Matrix{0\\2}.
		\end{align*}
		This shows that $c$ is supported on a circuit whose outer points $\Matrix{4\\0}$ and $\Matrix{0\\2}$ are even.
		If we now compute the circuit number as introduced in \cref{thm:nonnegativecircuit}, then we get 
		\begin{align*}
			\Theta_c \ = \ (4 \cdot 4)^{\frac{1}{2}} \ = \ 4 \ > \ |-2|,
		\end{align*}
		which proves nonnegativity.
		To see that the system is \emph{asymptotically} stable, we can e.g. subtract $p_2(\xb) = x_1^2  + x_3^2$ from $-\dot{V}$.
		\item Finally, we choose $\mathbb{GEN} = \DSONC$. 
		The positive support of $-\dot{V}(\xb)$ is given by 
		\begin{align*}
			A^+ \ = \ \set{\Matrix{4\\0\\0}, \Matrix{2\\0\\2}, \Matrix{2\\0\\0}, \Matrix{0\\2\\0}, \Matrix{0\\0\\2}}.
		\end{align*}
		Since we have only one interior point $\betab = \Matrix{2\\1\\0} \in A^- = \set{\betab}$, we only need to verify containment in the cone $\circuitDSONC$.
		Note that instead of verifying the nonnegativity of $-\dot{V}(\xb)$, we may equivalently consider $- \frac{1}{2} \dot{V}(\xb)$.
		This has the advantage that all involved coefficients now have absolute value $1$, which, as we see in what follows, simplifies our computations.
		We thus attempt to find a solution to the feasibility problem \cref{prop:dualfeasibilty} for $- \frac{1}{2} \dot{V}(\xb)$.
		Observe that $\ln\left(\frac{|c_{\betab}|}{c_{\alpb}}\right) = \ln(1) = 0$ for all $\alpb \in A^+$, where $c_{\alpb}, c_{\betab}$ stand for the coefficients of $- \frac{1}{2} \dot{V}(\xb)$.
		Thus, the feasibility problem admits the trivial solution $\Vector{\tau} = \ob$.
		It follows by \cref{thm:dsonc_stability} that \cref{eq:global_stability1} is asymptotically stable.
	\end{enumerate}
\label{ex:global_stability1}
\end{example}

To illustrate the difference between stability and asymptotic stability, we examine another small example.

\begin{example}[Linearized hanging pendulum]
	Consider the system
	\begin{align*}
		\dot{x_1} &\ = \ x_2, \qquad 
		\dot{x_2} \ = \ -\sin(x_1),
	\end{align*}
	which can be interpreted as a simple normalized hanging pendulum, where $x_1$ is the angle of deflection of the pendulum with respect to the vertical axis.
	The equilibrium $(x_1, x_2) = (0, 0)$ of this system describes the state in which the pendulum is hanging straight down.
	For small angles $x_1$, we can linearize this system using Taylor expansion and instead consider
	\begin{align*}
		\dot{x_1} &\ = \ x_2, \qquad 
		\dot{x_2} \ = \ -x_1.
	\end{align*}
	We can, for example, choose $V(x_1, x_2) = \frac{1}{2}x_1^2 + \frac{1}{2}x_2^2$ as a Lyapunov function for this system.
	With this choice we get that $-\dot{V}$ is the zero polynomial, so we are able to certify stability, but not asymptotic stability.

	Indeed, the pendulum moves along a trajectory corresponding to constant energy when perturbed, as illustrated in the phase plot in \cref{fig:pendulum}. 
	That is, once the pendulum is set in motion it will continue to swing on the same trajectory (since we assume no outside influences).
\end{example}

\begin{figure}[hbt]
	\includegraphics[draft=false, width=.75\textwidth]{./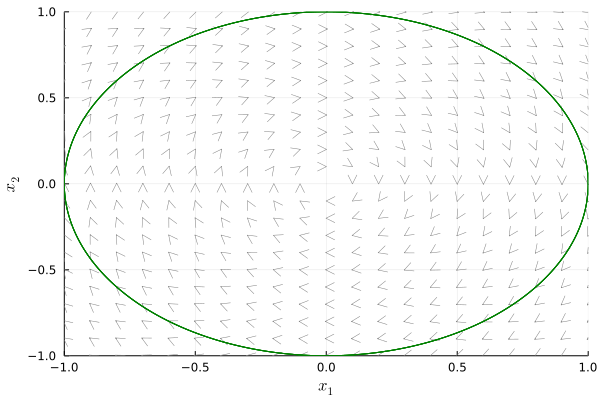}
	\caption{Phase plot for the linearized hanging pendulum}
	\label{fig:pendulum}
\end{figure}

As we already pointed out, the current approach to Lyapunov search via polynomial optimization is using SOS certificates (and semidefinite programming).
In fact, one can show that there exists an SOS Lyapunov function whenever a polynomial Lyapunov function exists. 

\begin{lemma}[{\cite[Lemma 5.1]{Ahmadi:Parrilo:Stability}}]
	If a polynomial system of the form \cref{eq:nonlinearsystem} has a polynomial Lyapunov function, then it also has a SOS Lyapunov function.
\end{lemma}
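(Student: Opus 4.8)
The plan is to retain the qualitative content of the given polynomial Lyapunov function $V$ while repairing the only feature that an SOS certificate cannot see directly, namely positivity. By hypothesis $V$ is positive definite, $V(\ob)=0$ and $V(\xb)>0$ for $\xb\neq\ob$, with $\dot V(\xb)=\nabla V(\xb)\T\Vector{f}(\xb)\le 0$ on the relevant neighbourhood. The naive hope that $V$ is already a sum of squares fails, since positive definite polynomials need not be sums of squares (the Motzkin and Robinson polynomials are the standard obstructions). Hence the statement has genuine content: one must exhibit a possibly different Lyapunov function that additionally admits an SOS certificate.

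The first step is to test the candidate $W=V^2$. This is attractive because $W$ is literally a single square, hence trivially a sum of squares, and it inherits every Lyapunov property of $V$: we have $W(\ob)=0$ and $W(\xb)=V(\xb)^2>0$ for $\xb\neq\ob$, so $W$ is positive definite, while $\dot W=2V\dot V\le 0$ on $\cU$ because $V\ge 0$ and $\dot V\le 0$ there, with strict negativity wherever $\dot V<0$, which also covers the asymptotic case. Thus $W$ is a genuine Lyapunov function in the sense of \cref{thm:lyapunovstability} that happens to be a sum of squares; if ``SOS Lyapunov function'' only asks for the function itself to lie in the SOS cone, this already closes the argument.

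The main obstacle arises when one additionally insists that $-\dot W$ carry its own SOS certificate, as the SOS programming method requires. For $W=V^2$ we obtain $-\dot W=-2V\dot V$, a product of the two nonnegative polynomials $V$ and $-\dot V$; such a product is nonnegative but in general not a sum of squares. To repair this I would invoke a Positivstellensatz-type multiplier result: a form positive definite on $\R^n\setminus\{\ob\}$ becomes a sum of squares after multiplication by a sufficiently high power of $\norm{\xb}^2=\sum_i x_i^2$ (Reznick's uniform-denominator theorem). I would therefore pass to the homogeneous data governing the behaviour at the origin, the lowest-degree homogeneous parts of $V$ and of $-\dot V$ that control local (asymptotic) stability, and multiply the Lyapunov candidate by an even power of $\norm{\xb}^2$ large enough to make both the function and its negated derivative SOS. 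The delicate point, and where I expect the real work to lie, is the word \emph{simultaneously}: scaling $V$ by $\norm{\xb}^{2N}$ to make it SOS alters $\dot V$ through the product rule, so the exponents certifying the two conditions must be reconciled, e.g.\ by bounding them separately and taking the larger, or, if a global certificate is not required, by restricting to a compact invariant sublevel set $\{V\le c\}$ and appealing to Putinar's Positivstellensatz on this Archimedean set. Either route reduces the lemma to the strict positive-definiteness inputs already guaranteed by the hypothesis, at the cost of possibly raising the degree of the certificate.
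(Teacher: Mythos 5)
Your $W=V^2$ argument is exactly the paper's proof: the lemma only asserts that the Lyapunov function itself lies in the SOS cone, and the paper makes precisely your observation that $-\dot{W}=-2V\dot{V}$ need not be a sum of squares, deferring that stronger statement to systems defined by homogeneous polynomials (\cite[Theorem 5.3]{Ahmadi:Parrilo:Stability}). Your third paragraph therefore attacks a claim the lemma does not make and can be dropped.
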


To see that this lemma holds, one can just replace the polynomial Lyapunov function $V$ by its square.
It is, however, not clear that the negative derivative of this squared Lyapunov function is also a sum of squares for general polynomial systems.
For this to hold, we need to additionally require the system to be defined by \emph{homogeneous} polynomials, see \cite[Theorem 5.3]{Ahmadi:Parrilo:Stability}.


In the rest of the paper we investigate how SONC and DSONC nonnegativity certificates can be used to find Lyapunov functions for given polynomial systems as an alternative to the SOS approach.

\subsection{Global Stability Analysis via the SONC Cone}
\label{sec:StabilitySONC}

When searching for polynomial Lyapunov functions, the involved coefficients are no longer fixed, but additional variables in our optimization problem. 
Thus, we need to investigate how variable coefficients affect our computations before we apply polynomial optimization techniques to stability analysis.

\begin{proposition}
	Verifying containment of a polynomial with support $A = A^+ \cup A^-$ and \emph{variable} coefficients $\{c_{\alpb}\}_{\alpb \in A^+} \subset \R_{\ge 0}$ and $\{c_{\betab}\}_{\betab \in A^-} \subset \R$ in the SONC cone $\signedSONC$ can be written as an REP.
	\label{cor:primalREP}
\end{proposition}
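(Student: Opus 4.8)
The plan is to read off the verification problem directly from the SAGE/SONC characterization in \cref{thm:sagecontainment} and then observe that promoting the coefficients of $p$ to decision variables introduces only additional \emph{linear} constraints, so that the feasibility problem stays within the relative entropy cone framework. First I would fix the full list of variables. By \cref{thm:sagecontainment}, a polynomial $p(\xb) = \sum_{\alpb \in A^+} c_{\alpb} \xb^{\alpb} + \sum_{\betab \in A^-} c_{\betab}\xb^{\betab}$ lies in $\signedSONC$ if and only if for every $\betab \in A^-$ there exist $\Vector{c}^{(\betab)}, \Vector{v}^{(\betab)} \in \R^{A^+}_{\ge 0}$ satisfying the three stated conditions. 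Since we now allow the coefficients to vary, I would treat the whole collection $\{c_{\alpb}\}_{\alpb \in A^+}$, $\{c_{\betab}\}_{\betab \in A^-}$, $\{c_{\alpb}^{(\betab)}\}$, $\{v_{\alpb}^{(\betab)}\}$ as the unknowns of a \emph{single} feasibility problem, subject to $c_{\alpb} \ge 0$ and $c_{\alpb}^{(\betab)}, v_{\alpb}^{(\betab)} \ge 0$.

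Next I would sort the three families of constraints. The matching condition $\sum_{\betab \in A^-} c_{\alpb}^{(\betab)} = c_{\alpb}$ and the support condition $\sum_{\alpb \in A^+} v_{\alpb}^{(\betab)}(\alpb - \betab) = 0$ are linear in all of these variables, because the exponent differences $\alpb - \betab$ are fixed; in particular, the matching condition stays linear even with $c_{\alpb}$ variable, since $c_{\alpb}$ enters affinely on the right-hand side. To put the entropic condition $D(\Vector{v}^{(\betab)}, e \cdot \Vector{c}^{(\betab)}) \le c_{\betab}$ into standard form, I would introduce for each $\betab \in A^-$ and each $\alpb \in A^+$ an epigraph variable $t_{\alpb}^{(\betab)}$ and impose
\[
(v_{\alpb}^{(\betab)}, \; e \cdot c_{\alpb}^{(\betab)}, \; t_{\alpb}^{(\betab)}) \in \cR_1, \qquad \sum_{\alpb \in A^+} t_{\alpb}^{(\betab)} \le c_{\betab}.
\]
Since $D(\Vector{v}^{(\betab)}, e \cdot \Vector{c}^{(\betab)}) = \sum_{\alpb \in A^+} v_{\alpb}^{(\betab)} \ln\!\big(v_{\alpb}^{(\betab)}/(e \cdot c_{\alpb}^{(\betab)})\big)$, summing the cone memberships shows this reformulation is equivalent to the original inequality, with $c_{\betab}$ appearing only linearly. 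Stacking all triples over the index pairs $(\betab, \alpb)$ into vectors $\Vector{y}, \Vector{z}, \Vector{t}$ of length $N = \#A^- \cdot \#A^+$ collapses these into a single membership $(\Vector{y}, \Vector{z}, \Vector{t}) \in \cR_N$.

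Finally I would assemble everything: all constraints are linear (in)equalities, nonnegativity bounds, or a single relative entropy cone membership, and the verification task is a pure feasibility question, which fits the definition of a relative entropy program by minimizing a constant (or arbitrary linear) objective over this feasible set. The one point that must be checked with care — and what I regard as the main obstacle — is that making the coefficients variable introduces no genuinely nonlinear coupling: the relative entropy nonlinearity involves solely the auxiliary variables $\Vector{v}^{(\betab)}$ and $\Vector{c}^{(\betab)}$, while the polynomial's own coefficients $c_{\alpb}, c_{\betab}$ enter only through the linear matching and bounding constraints. Establishing this separation is exactly what guarantees the problem remains an REP rather than a general nonconvex program, which completes the argument.
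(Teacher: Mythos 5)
Your proposal is correct and follows essentially the same route as the paper: both invoke \cref{thm:sagecontainment} directly and observe that, with the coefficients promoted to decision variables, the resulting feasibility problem consists only of linear constraints and relative entropy constraints, hence is an REP. Your additional care in checking that the entropic nonlinearity touches only the auxiliary variables $\Vector{v}^{(\betab)}, \Vector{c}^{(\betab)}$ (with $c_{\alpb}$ and $c_{\betab}$ entering linearly) is a worthwhile elaboration of a point the paper leaves implicit, but it is not a different argument.
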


\begin{proof}
	Let $A \subset \N^n$ be a support set with fixed sign distribution $A = A^+ \cup A^-$, and let $p$ be a polynomial supported on $A$ with variable coefficients $\{c_{\alpb}\}_{\alpb \in A^+}$ and $\{c_{\betab}\}_{\betab \in A^-}$. 
	Then it is a direct consequence of \cref{thm:sagecontainment} that $p \in \signedSONC$ if and only if the coefficients take values for which the relative entropy program 
	\begin{align}
		\begin{aligned}
			\min \qquad & 0 \\
			\text{subject to } \qquad & \sum_{\betab \in A^-} c_{\alpb}^{(\betab)} &\ = &\ c_{\alpb} & \qquad \text{ for all } \ \alpb \in A^+ , & \\
			& D(\Vector{v}^{(\betab)}, e \cdot \Vector{c}^{(\betab)}) &\ \le &\ c_{\betab} & \qquad\text{ for all } \ \betab \in A^-, &\text{ and } \\
			& \sum_{\alpb \in A^+} v_{\alpb}^{(\betab)} (\alpb - \betab) &\ = &\ 0 & \qquad \text{ for all } \ \betab \in A^-, &
		\end{aligned}
		\label{eq:varREP}
	\end{align}
	in the additional variables $\Vector{c}^{(\betab)}, \Vector{v}^{(\betab)} \in \R^{A^+}_{\ge 0}$, $\betab \in A^-$, has a feasible solution.
\end{proof}

\begin{proposition}
	\label{prop:LyapSONCREP}
	Searching for a Lyapunov function in the SONC cone which verifies that the equilibrium $\xb^\ast = \ob$ of the system $\Vector{f} \in \R[\xb]^n$ of the form \cref{eq:nonlinearsystem} is stable can be done via relative entropy programming.
\end{proposition}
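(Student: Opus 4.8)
The plan is to combine the certificate condition from \cref{thm:gen_stability}, specialized to $\mathbb{GEN} = \signedSONC$, with the variable-coefficient REP characterization of \cref{cor:primalREP}, exploiting that every condition imposed on the Lyapunov candidate is linear or affine-linear in the free coefficients of $V$. First I would fix an ansatz: a support set $A_V \subset \N^n$ for the candidate $V(\xb) = \sum_{\alpb \in A_V} c_\alpb \xb^\alpb$, together with a sign distribution $A_V = A_V^+ \cup A_V^-$ prescribing which $c_\alpb$ are required to be nonnegative (the even exponents forming the positive part) and which are free. The coefficients $c_\alpb$ then become the primary decision variables. I would also fix, once and for all, a strictly positive polynomial $p_1$ that is itself a sum of monomial squares, hence trivially SONC and positive on $\R^n \setminus \{\ob\}$, so that $p_1$ contributes only constants to the model.

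Next I would record the three stability conditions of \cref{thm:gen_stability} as constraints in these variables. The condition $V(\ob) = 0$ is the linear equation forcing the constant coefficient of $V$ to vanish. The crucial observation is that the map $V \mapsto \dot{V} = (\nabla V)^\T \Vector{f}$ is \emph{linear} in the coefficients of $V$ once the polynomial vector field $\Vector{f}$ is fixed; hence the coefficients of $-\dot{V}$ are linear functions of the $c_\alpb$, and the coefficients of $V - p_1$ are affine-linear functions of the $c_\alpb$. In particular both $-\dot{V}$ and $V - p_1$ are polynomials with \emph{variable} coefficients of exactly the type treated in \cref{cor:primalREP}, once their support sets (determined by $A_V$, $\supp(\Vector{f})$, and $\supp(p_1)$) and sign distributions are fixed.

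I would then invoke \cref{cor:primalREP} twice, introducing one copy of the auxiliary relative-entropy variables $\Vector{c}^{(\betab)}, \Vector{v}^{(\betab)}$ for the SONC-membership of $-\dot{V}$ and a second copy for $V - p_1$. The resulting feasibility system of \cref{thm:sagecontainment} consists of the linking equalities $\sum_{\betab} c^{(\betab)}_\alpb = c_\alpb$, the support equalities $\sum_\alpb v^{(\betab)}_\alpb(\alpb - \betab) = 0$, and the relative-entropy inequalities $D(\Vector{v}^{(\betab)}, e \cdot \Vector{c}^{(\betab)}) \le c_\betab$. Since the coefficients $c_\alpb, c_\betab$ appearing on the right are themselves linear (resp. affine-linear) functions of the decision variables, the linking and support constraints remain linear, and each entropy inequality remains an admissible relative-entropy-cone constraint: writing $D(\Vector{v}, e \cdot \Vector{c}) = \sum_\alpb \big(v_\alpb \ln(v_\alpb / c_\alpb) - v_\alpb\big)$ and introducing slacks $t_\alpb$ with $v_\alpb \ln(v_\alpb / c_\alpb) \le t_\alpb$ places the nonlinear part into $\cR_n$ and leaves only the linear inequality $\sum_\alpb (t_\alpb - v_\alpb) \le c_\betab$. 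Together with a null objective, this is precisely an REP, and by \cref{thm:gen_stability} any feasible point yields a Lyapunov function certifying stability.

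The main obstacle I anticipate is bookkeeping rather than conceptual: one must verify that feeding the affine reparametrization $\{c_\alpb\} \mapsto (\text{coefficients of } -\dot{V},\ \text{coefficients of } V - p_1)$ into the two copies of the system of \cref{cor:primalREP} keeps all constraints in the linear/relative-entropy-cone format, that is, that no quantity appearing inside a logarithm is itself a decision variable. This holds because in \cref{thm:sagecontainment} the variable coefficients $c_\alpb, c_\betab$ occur only linearly, as the right-hand sides of the linking and entropy constraints, while the relative entropy function is evaluated solely on the auxiliary variables $\Vector{v}^{(\betab)}, \Vector{c}^{(\betab)}$. A secondary caveat worth stating is that fixing the sign distribution of the ansatz a priori genuinely restricts the search space; different choices yield different REPs, and feasibility of any single one suffices for stability.
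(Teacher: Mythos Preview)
Your proposal is correct and follows essentially the same route as the paper: fix a support and sign distribution for the ansatz $V$, invoke \cref{cor:primalREP} once for $V-p_1$ and once for $-\dot V$, and use that the coefficients of $-\dot V$ depend linearly on those of $V$ so that all constraints remain linear or of relative-entropy type. Your write-up is in fact slightly more explicit than the paper's on two points the paper leaves implicit---the observation that the variable coefficients $c_\alpb,c_\betab$ enter only linearly on the right-hand sides of \cref{thm:sagecontainment} (never inside the entropy function), and the slack-variable reduction to the cone $\cR_n$---and your closing caveat about the sign distribution matches the paper's later discussion of \texttt{distribute\_signs}.
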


\begin{proof}
	We have seen in \cref{cor:primalREP} that verifying containment of a polynomial with variable coefficients in the SONC cone can be done by solving the REP \cref{eq:varREP}.
	If we now use this approach to search for a Lyapunov function, then we have to verify containment of both the Lyapunov function and its negative derivative in the SONC cone.
	Let $A \subset \N^n$ be a suitable support set with fixed sign distribution $A = A^+ \cup A^-$, and let $V$ be a polynomial Lyapunov candidate supported on $A$ with variable coefficients  $\{c_{\alpb}\}_{\alpb \in A^+}$ and $\{c_{\betab}\}_{\betab \in A^-}$, and let $p \in \R^A$ be a strictly positive polynomial with fixed coefficients $\set{\epsilon_{\alpb}}_{\alpb \in A}$. 
	We apply \cref{thm:gen_stability} for $\mathbb{GEN} = \ASONC$, which first requires that $V - p \in \ASONC$.
	I.e., we need to solve the relative entropy feasibility problem 
	\begin{align}
		\begin{aligned}
			\min \qquad & 0 \\
			\text{subject to } \qquad & \sum_{\betab \in A^-} c_{\alpb}^{(\betab)} &\ = &\ c_{\alpb} - \epsilon_{\alpb} & \qquad \text{ for all } \ \alpb \in A^+ , & \\
			& D(\Vector{v}^{(\betab)}, e \cdot \Vector{c}^{(\betab)}) &\ \le &\ c_{\betab} - \epsilon_{\betab} & \qquad\text{ for all } \ \betab \in A^-, &\text{ and } \\
			& \sum_{\alpb \in A^+} v_{\alpb}^{(\betab)} (\alpb - \betab) &\ = &\ 0 & \qquad \text{ for all } \ \betab \in A^-, &
		\end{aligned}
	\end{align}
	given by \cref{cor:primalREP}.
	Next, $-\dot{V}$ has to be contained in the SONC cone. 
	For this, note that the coefficients of $-\dot{V}$, denoted by $\Vector{d}(\Vector{c})$, are given by the variable coefficients $\Vector{c}$ of $V$ and the system $\Vector{f}$ as is \cref{eq:nonlinearsystem} through linear equations. 
	This translates to adding linear constraints to the above optimization problem. 
	Let $B = B^+ \cup B^-$ denote the support set of $- \dot{V}$ together with some fixed sign distribution.
	We add another set of constraints of type \cref{eq:varREP} in the additional variables $\Vector{d}^{(\betab)}, \Vector{w}^{(\betab)} \in \R^{B^+}_{\ge 0}$ for $\betab \in B^-$:
	\begin{align*}
		\begin{aligned}
			\sum_{\betab \in B^-} d_{\alpb}^{(\betab)} &\ = &\ d_{\alpb}(\Vector{c}) & \qquad \text{ for all } \ \alpb \in B^+ , & \\
			D(\Vector{w}^{(\betab)}, e \cdot \Vector{d}^{(\betab)}) &\ \le &\ d_{\betab}(\Vector{c}) & \qquad \text{ for all } \ \betab \in B^-, &\text{ and } \\
			\sum_{\alpb \in B^+} w_{\alpb}^{(\betab)} (\alpb - \betab) &\ = &\ 0 & \qquad \text{ for all } \ \betab \in B^-. &
		\end{aligned}
	\end{align*}
	This ensures that $-\dot{V} \in \SONC_{B}$, so for any solution of our optimization problem the stability of the equilibrium of $\Vector{f}$ follows by \cref{thm:gen_stability}.
	Since all constraints are either (relative) entropy or linear constraints, the constructed problem is an REP.
\end{proof}

We now develop an algorithm for computing Lyapunov functions via the SONC cone.
The goal is finding coefficients $\Vector{c} \in \polys$ for some given support set $A \subset \N^n$, such that $V(\xb) = \sum_{\alpb \in A} c_{\alpb} \xb^{\alpb}$ satisfies the conditions in \cref{thm:gen_stability}.
We want to use \cref{thm:sagecontainment} and \cref{thm:gen_stability}.
Thus, the algorithm has to compute the polynomials
\begin{align*}
	P(\xb) = V(\xb) - p_1(\xb)
\end{align*}
with variable coefficients $\Vector{c} \in \polys$, and
\begin{align*}
	P^\prime(\xb) = -\dot{V}(\xb) - p_2(\xb)
\end{align*}
with coefficients $\Vector{d}(\Vector{c}) \in \polys$, which linearly depend on $\Vector{c}$. 
Depending on whether we want to certify stability or asymptotic stability, $p_2$ can be chosen to be the zero polynomial.
We assume that $p_1$ and $p_2$ are polynomials, which, in addition to being positive on $\R^n \setminus \set{\ob}$, have support sets that are contained in $A$.
This is not required by \cref{thm:gen_stability}, but is has the advantage that we avoid adding additional negative terms.
To determine suitable values for the coefficients $\Vector{c}$, we use the optimization problem developed in the proof of \cref{prop:LyapSONCREP}. 
This method involves choosing a fixed sign distribution, and we want to choose a distribution that is likely to lead to a feasible solution.
To this end, we make an informed choice for the signs of the coefficients as follows:
\begin{enumerate}
	\item Compute the subsets of the supports of $P$ and $P^\prime$ which are contained in $(2\N)^n$.
	\item Compute the Newton polytopes of $P$ and $P^\prime$. 
\end{enumerate}

For any polynomial to be nonnegative, the exponents that are vertices of the Newton polytope have to be even and the corresponding coefficients need to be positive. 
This motivates the following assumptions.
\begin{enumerate}[start=3]
	\item Constrain all those variable coefficients to be zero that correspond to \emph{odd} vertices of the computed Newton polytopes.
	\item Constrain all coefficients corresponding to vertices of the Newton polytope to be nonnegative.
\end{enumerate}
There is no obvious best choice to distribute signs for the remaining variable coefficients.
But since the goal is constructing nonnegative polynomial, we aim at creating as many positive terms as possible. 
We do this by adding the following constraints.
\begin{enumerate}[start=5]
	\item Coefficients that have not been assigned a sign yet are constrained to be nonnegative if the corresponding exponent is even.
\end{enumerate}
Since the variable coefficients depend on each other, the number of redundant constraints can become very large.
Thus, our final step is removing all redundant constraints that have been added so far.
We do this by solving the following linear program. 
\begin{enumerate}[start=6]
	\item Assume that our constraints are given as a list of the form $[c_i \le 0 \ : \ i \in [l]]$, where the $c_i$ are linear functions. 
	Further assume that these constraints are already known to be not redundant for $i \in [k]$ for some $k < l$ and initialize the list containing all non-redundant constraints as $\mathtt{constraints} = [c_i \le 0 \ : \ i \in [k]]$. 
	Then for all remaining linear functions $c_j$ with $j \in \set{k+1, \ldots, l}$ we solve 
	\begin{align*}
		\max & \quad c_j \\
		\text{subject to } & \quad \mathtt{constraints} 
	\end{align*}
	If this LP has a positive feasible solution, then the constraint defined by $c_j$ is not redundant and we add it to the list $\mathtt{constraints}$ before repeating the procedure for $c_{j+1}$. 
	If the LP does not have a positive feasible solution, then we can disregard the constraint given by $c_j$.
\end{enumerate}

We refer to the method of choosing signs given by (1)-(6) as $\struc{\mathtt{distribute\_signs}}$. 		
The corresponding pseudocode can be found in the Appendix, see \cref{Alg:distributeSigns}
Note that step (4), while being a simple and reasonable assumption, is not necessarily the best possible choice.
We illustrate this fact in \cref{ex:5}.
In general, the sign choice for the coefficients which, if possible, leads to a Lyapunov function is not unique.
We summarize the described method of searching Lyapunov functions via the SONC cone in \cref{Alg:primalREPalg}.

\begin{algorithm}~

	\noindent\INPUT{$A \subset \N^n \setminus \set{\ob}$, $\Vector{f} \in \R[\xb]^n, $ $p_1, p_2 \in \ASONC$ strictly positive on $\R^n \setminus \set{0}$} \\
	\OUTPUT{$V \in \ASONC$ strictly positive on $\R^n \setminus \set{0}$}
	\begin{algorithmic}[1]
		\State $V \gets \mathtt{poly}(\Vector{c}, A)$ 
		\COMMENT{\parbox[t]{.4\linewidth}{polynomial with variable coefficient vector $\Vector{c}$ and support $A$}}
		\State compute $-V' \ = \ \mathtt{poly}(\Vector{d}(\Vector{c}), A^\prime)$ 
		\COMMENT{\parbox[t]{.4\linewidth}{negative derivative of $V$; coefficients depend on $\Vector{c}$}}
		\State $P \gets V - p_1$, $P' \gets -V' - p_2$
		\State add sign constraints for $\Vector{c}, \Vector{d}(\Vector{c})$ according to \texttt{distribute\_signs}
		
			\State add constraints $P \in \signedSONC$ and $P' \in \SONC_{(A^\prime)^+, (A^\prime)^-}$


		\State solve feasibility problem

		\State \Return{V}
	\end{algorithmic}
\label{Alg:primalREPalg}
\end{algorithm}

Note that Lyapunov functions found via this algorithm certify the asymptotic stability of the underlying system.
If only a certificate for stability is required, then we can choose $p_2$ as the zero polynomial.

\subsection{Global Stability Analysis via the DSONC Cone}
\label{sec:StabilityDSONC}

As discussed in \cref{sec:DualSONC}, containment in the DSONC cone is a certificate of nonnegativity.
Hence, setting $\mathbb{GEN} = \signedDSONC$ for some support set $A = A^+ \cup A^-$ yields a sufficient criterion for the asymptotic stability of an equilibrium of \cref{eq:nonlinearsystem}.
It further holds that optimizing over the DSONC cone can be done via linear programming, see \cref{prop:dualfeasibilty}. 
Being able to find Lyapunov functions using linear programming would be highly desirable, so we first investigate how the optimization problem using the DSONC cone to certify nonnegativity changes if the involved coefficients are variable instead of fixed. 

\begin{proposition}
	Verifying containment of a polynomial with support $A = A^+ \cup A^-$ and \emph{variable} coefficients $\{c_{\alpb}\}_{\alpb \in A^+}$ and $\{c_{\betab}\}_{\betab \in A^-}$ in the cone $\signedDSONC = \sum_{\betab \in A^-} \circuitDSONC$ can be written as a linear program.
\end{proposition}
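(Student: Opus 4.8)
The plan is to mirror the proof of \cref{cor:primalREP}, but to invoke the dual feasibility description from \cref{prop:dualfeasibilty} in place of the relative-entropy description from \cref{thm:sagecontainment}. First I would unfold the Minkowski-sum definition of $\signedDSONC$: a polynomial $p$ with the variable coefficients $\{c_{\alpb}\}_{\alpb \in A^+}$ and $\{c_{\betab}\}_{\betab \in A^-}$ lies in $\signedDSONC$ if and only if it decomposes as $p = \sum_{\betab \in A^-} p^{(\betab)}$, where each summand $p^{(\betab)}$ is supported on $A^+ \cup \set{\betab}$ with coefficient vector $\Vector{c}^{(\betab)} \in \dualcircuitSONC$. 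Matching coefficients monomial-by-monomial turns this into the linear identities $\sum_{\betab \in A^-} c_{\alpb}^{(\betab)} = c_{\alpb}$ for every $\alpb \in A^+$, together with $c_{\betab}^{(\betab)} = c_{\betab}$ for every $\betab \in A^-$ (only the $\betab$-th summand carries the monomial $\xb^{\betab}$).

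Next I would impose, for each summand, the membership $\Vector{c}^{(\betab)} \in \dualcircuitSONC$ via the feasibility program \cref{eq:linprg} of \cref{prop:dualfeasibilty}. This contributes the sign conditions $c_{\alpb}^{(\betab)} \ge 0$ for $\alpb \in \vertices{\conv(A^+ \cup \set{\betab})}$, which are linear, and the logarithmic conditions
\begin{align*}
	\ln\left( \frac{|c_{\betab}^{(\betab)}|}{c_{\alpb}^{(\betab)}} \right) \ \le \ (\alpb - \betab)\T \Vector{\tau}^{(\betab)} \qquad \text{ for all } \alpb \in A^+,
\end{align*}
in the auxiliary variables $\Vector{\tau}^{(\betab)}$.

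The crucial step, and the point at which the argument departs from the SONC case, is linearizing these logarithmic conditions. Since the coefficients are variable and otherwise unconstrained, I would introduce their logarithms as the fundamental decision variables, setting $\xi_{\alpb}^{(\betab)} = \ln c_{\alpb}^{(\betab)}$ and $\xi_{\betab}^{(\betab)} = \ln |c_{\betab}^{(\betab)}|$. Each logarithmic condition then reads $\xi_{\betab}^{(\betab)} - \xi_{\alpb}^{(\betab)} \le (\alpb - \betab)\T \Vector{\tau}^{(\betab)}$, which is linear in $(\xi^{(\betab)}, \Vector{\tau}^{(\betab)})$, and the positivity $c_{\alpb}^{(\betab)} = \exp(\xi_{\alpb}^{(\betab)}) > 0$ holds automatically. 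This is exactly where the DSONC cone behaves better than the SONC cone: the defining inequality of \cref{prop:dualfeasibilty} is a difference of logarithms bounded by a linear term, which linearizes under this substitution, whereas the relative-entropy term $D(\Vector{v}^{(\betab)}, e \cdot \Vector{c}^{(\betab)})$ appearing in \cref{thm:sagecontainment} does not. Collecting the linear sign conditions and the linearized logarithmic conditions over all $\betab \in A^-$ yields a single linear feasibility program in the variables $\{\xi^{(\betab)}\}$ and $\{\Vector{\tau}^{(\betab)}\}$, so containment in $\signedDSONC$ is certified by an LP.

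The main obstacle I anticipate is the interaction between the two coordinate systems: the matching identities $\sum_{\betab} c_{\alpb}^{(\betab)} = c_{\alpb}$ are linear in the coefficients themselves, while the feasibility conditions are linear only after passing to logarithmic coordinates, in which those identities become sums of exponentials. The resolution I would emphasize is that, because the $c_{\alpb}$ are free variables rather than prescribed targets, the matching identities merely define the output coefficients and impose no further constraint; one therefore takes the logarithmic piece-coefficients together with the $\Vector{\tau}^{(\betab)}$ as the genuine variables and keeps the entire system linear. I would also flag that this freedom disappears in the Lyapunov search setting, where the coefficients of $-\dot{V}$ are linearly tied to those of $V$: that coupling reintroduces logarithms of linear forms and is precisely what forces the relative entropy program of \cref{prop:LyapDSONCREP}.
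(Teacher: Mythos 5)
Your proof is correct and follows essentially the same route as the paper's: reduce to the case of a single negative exponent via the Minkowski-sum decomposition, then linearize the feasibility conditions of \cref{prop:dualfeasibilty} by passing to logarithmic coordinates in the coefficients (the paper uses $\rho^{(\alpb)} = \ln(c_{\alpb})$ and $\sigma^{(\betab)} = -\ln(|c_{\betab}|)$ where you use $\xi$'s). Your explicit observation that the aggregation identities $\sum_{\betab \in A^-} c_{\alpb}^{(\betab)} = c_{\alpb}$ impose no genuine constraint in the logarithmic coordinate system because the $c_{\alpb}$ are free outputs is a point the paper's proof leaves implicit, and it is exactly the right resolution.
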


\begin{proof}
	First, consider the case $A^- = \set{\betab}$.
	To verify containment of the coefficient vector $\Vector{c}$ in the dual SONC cone $\dualcircuitSONC$, we need to find some $\Vector{\tau} \in \R^n$ such that 
	\begin{align}
		\ln \left(\frac{\abs{c_{\betab}}}{c_{\alpb}}\right) \ \le \ \left(\alpb - \betab\right)\T \Vector{\tau}
		\label{eq:dualcontainment}
	\end{align}
	for all $\alpb \in A^+$.
	To write this as a system of \emph{linear} inequalities for variable coefficients, we define variables $\rho^{(\alpb)} = \ln(c_{\alpb})$ and $\sigma^{(\betab)} = - \ln(|c_{\betab}|)$. 
	With this \cref{eq:dualcontainment} becomes
	\begin{align}
		0 \ \le \ \left(\alpb - \betab\right)\T \Vector{\tau} + \sigma^{(\betab)} + \rho^{(\alpb)},
	\label{eq:varconst_linear}
	\end{align}
	which is linear.
	For the case of general support sets, we use the fact that, by construction, any DSONC polynomial $p(\xb) = \sum_{\alpb \in A^+} c_{\alpb} \xb^{\alpb} + \sum_{\betab \in A^-} c_{\betab} \xb^{\betab}$ can be written as 
	\begin{align*}
		p(\xb) \ = \ \sum_{\betab \in A^-} p_{\betab}(\xb),
	\end{align*}
	where $p_{\betab} = \sum_{\alpb \in A^+} c^{(\betab)}_{\alpb} \xb^{\alpb} + c_{\betab} \xb^{\betab} \in \circuitDSONC$.
	Thus, we only get additional linear constraints of the form $\sum_{\betab \in A^-} c^{(\betab)}_{\alpb} = c_{\alpb}$ and the claim follows.

\end{proof}

Unfortunately, this is no longer the case if we want to solve the optimization problem from \cref{thm:gen_stability} which verifies asymptotic stability of a polynomial system. 
The reason for this is that we now need to solve two feasibility problems which depend on each other.
Specifically, the coefficients of the derivative of the Lyapunov candidate are dependent on the variable coefficients of the Lyapunov candidate $V(\xb)$.
%
%
%
 We can, however, formulate the problem as an REP, as the following proposition shows.

 \begin{proposition}
	\label{prop:LyapDSONCREP}
	 The problem in \cref{thm:gen_stability} for $\mathbb{GEN} = \signedDSONC$ can be stated as an REP.
 \end{proposition}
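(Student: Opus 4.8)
The plan is to mirror the structure of the proof of \cref{prop:LyapSONCREP}, replacing the primal SONC containment constraints with their DSONC counterparts, and to track carefully where the coupling between $V$ and $-\dot{V}$ forces a genuinely nonlinear (relative entropy) constraint rather than a linear one. First I would set up the objects exactly as before: fix a support set $A = A^+ \cup A^-$, let $V(\xb) = \sum_{\alpb \in A} c_{\alpb}\xb^{\alpb}$ be a Lyapunov candidate with variable coefficients $\Vector{c}$, and record that the coefficients $\Vector{d}(\Vector{c})$ of $-\dot{V}$ depend \emph{linearly} on $\Vector{c}$ together with the data of the system $\Vector{f}$. Invoking \cref{thm:gen_stability} for $\mathbb{GEN} = \signedDSONC$, the two conditions to encode are $V - p_1 \in \signedDSONC$ and $-\dot{V} - p_2 \in \SONC^{\mathtt{d}}$ for a suitable support set of $-\dot{V}$ (or, using \cref{thm:dsonc_stability}, simply $V \in \cD$ and $-\dot{V} \in \cD$).

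Next I would encode each DSONC containment. By \cref{prop:dualfeasibilty}, containment of a single-interior-point polynomial in $\dualcircuitSONC$ is governed by the inequalities $\ln\bigl(|v_{\betab}|/v_{\alpb}\bigr) \le (\alpb - \betab)\T \Vector{\tau}^{(\betab)}$; for a general negative support one splits into a Minkowski sum over $\betab \in A^-$ and adds the linear splitting constraints $\sum_{\betab \in A^-} c^{(\betab)}_{\alpb} = c_{\alpb}$, exactly as in the preceding proposition. The crucial observation is that when the coefficients are variable, the left-hand side $\ln(|v_{\betab}|/v_{\alpb})$ is no longer a constant but a difference of logarithms of the optimization variables. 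In the previous proof this was linearized by the substitution $\rho^{(\alpb)} = \ln(c_{\alpb})$, $\sigma^{(\betab)} = -\ln(|c_{\betab}|)$, which works precisely because the two feasibility problems were decoupled. Here the same substitution cannot be applied simultaneously to both $V$ and $-\dot{V}$: the logarithms of the coefficients of $V$ and the logarithms of the coefficients of $-\dot{V}$ would both have to serve as variables, but $\Vector{d}(\Vector{c})$ depends linearly on $\Vector{c}$, so a constraint that is linear in $\ln(c_{\alpb})$ becomes $\ln$ of a \emph{linear} expression in the $c_{\alpb}$ — a relation that cannot be kept linear under a single logarithmic change of variables.

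To finish, I would therefore rewrite each inequality of the form $\ln(|v_{\betab}|) - \ln(v_{\alpb}) \le (\alpb - \betab)\T \Vector{\tau}^{(\betab)}$ directly as a relative entropy constraint. Exponentiating and rearranging, the condition $v_{\alpb}\exp\bigl((\alpb-\betab)\T\Vector{\tau}^{(\betab)}\bigr) \ge |v_{\betab}|$ can be expressed through the relative entropy cone $\cR_n$: a constraint of the shape $y \ln(y/z) \le t$ captures exactly the logarithmic coupling between the variable coefficients, with $y, z$ linear in the coefficient variables and the auxiliary $\Vector{\tau}^{(\betab)}$ entering linearly. Since $\Vector{d}(\Vector{c})$ is linear in $\Vector{c}$, every remaining constraint (the splitting equalities, the sign constraints, and the dependence of $-\dot{V}$ on $V$) is linear, and the objective is constant. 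Hence the whole feasibility problem consists of linear constraints together with constraints living in $\cR_n$, which is by definition an REP. The main obstacle I expect is the second paragraph: making precise \emph{why} the linearization that succeeded in the decoupled single-polynomial case genuinely fails once $V$ and $-\dot{V}$ share the variables $\Vector{c}$, and then cleanly recasting the resulting logarithmic constraint in the canonical relative entropy form $y\ln(y/z) \le t$ so that it manifestly lies in $\cR_n$.
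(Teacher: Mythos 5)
Your skeleton is the right one, and your diagnosis of why the LP formulation breaks down (the coefficients of $-\dot{V}$ depend linearly on those of $V$, so a single logarithmic change of variables cannot linearize both containment problems simultaneously) matches the paper's own discussion. But the decisive step is left as an acknowledged gap, and the one concrete claim you make about it is not correct as stated. You assert that the constraint $\ln\left(|c_{\betab}|/c_{\alpb}\right) \le (\alpb - \betab)\T \Vector{\tau}^{(\betab)}$ can be expressed in the canonical form $y\ln(y/z) \le t$ ``with the auxiliary $\Vector{\tau}^{(\betab)}$ entering linearly.'' Matching against that canonical form with $y = |c_{\betab}|$ and $z = c_{\alpb}$ forces $t = |c_{\betab}| \cdot (\alpb - \betab)\T \Vector{\tau}^{(\betab)}$, which is \emph{bilinear} in the variables $(c_{\betab}, \Vector{\tau}^{(\betab)})$; as written, the problem is therefore not yet an REP, and $\Vector{\tau}^{(\betab)}$ does not enter linearly.

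The missing idea is a one-line rescaling: multiply the inequality through by $|c_{\betab}| \ge 0$ and absorb the bilinear product into a fresh variable $\Vector{\tilde\tau}^{(\betab)} = |c_{\betab}|\, \Vector{\tau}^{(\betab)}$, which is an equivalent reparametrization because $\Vector{\tau}^{(\betab)}$ ranges over all of $\R^n$. The constraint becomes $|c_{\betab}| \ln\left(|c_{\betab}|/c_{\alpb}\right) \le (\alpb - \betab)\T \Vector{\tilde\tau}^{(\betab)}$, whose left-hand side is exactly the relative entropy expression $y\ln(y/z)$ in the coefficient variables and whose right-hand side is now genuinely linear in $\Vector{\tilde\tau}^{(\betab)}$. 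Combined with the linear splitting constraints over $\betab \in A^-$ and the linear dependence of the coefficients of $-\dot{V}$ on those of $V$, every constraint is then either a relative entropy constraint or a linear one, which is what makes the whole feasibility problem an REP. With that substitution supplied your argument closes; without it, the crux of the proposition remains unproved.
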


 \begin{proof}
	We first note that for any fixed $\betab \in A^-$ we can replace the condition 
	\begin{align*}
		\ln\left(\frac{|c_{\betab}|}{c_{\alpb}}\right) \ \le \ (\alpb - \betab)\T \Vector{\tau} \qquad \text{ for  all } \ \alpb \in A^+
	\end{align*}
	for containment in $\circuitDSONC$ by 
	\begin{align}
		|c_{\betab}| \cdot \ln\left(\frac{|c_{\betab}|}{c_{\alpb}}\right) \ \le \ (\alpb - \betab)\T \Vector{\tilde\tau} \qquad \text{ for  all } \ \alpb \in A^+,
		\label{eq:REPcondition}
	\end{align}
	where we set $\Vector{\tilde\tau} = |c_{\betab}|\Vector{\tau}$.
	We then observe that the coefficients of the derivative $\dot V(\xb)$ depend \emph{linearly} on the coefficients of the Lyapunov candidate $V(\xb)$.
	Thus, all needed constraints are either relative entropy constraints of the form \cref{eq:REPcondition}, or linear constraints.  
 \end{proof}

The reason why we cannot formulate this as a linear problem as we did in \cref{eq:varconst_linear} is that we can no longer simply substitute $\rho^{(\alpb)} = \ln(c_{\alpb})$ and $\sigma^{(\betab)} = - \ln(|c_{\betab}|)$ and recover the true values for the variables $c_{\alpb}$ and $c_{\betab}$ after solving the optimization problem in the variables $\rho^{(\alpb)}$ and $\sigma^{(\betab)}$. 
This is due to the fact that now all variables $c_{\alpb}$ and $c_{\betab}$ describing coefficients of $-\dot{V}$ now in turn linearly depend on variable coefficients of $V$.
This means that while we can still use the DSONC cone to find Lyapunov functions, it no longer provides a computational advantage over the primal SONC cone. 
Since finding Lyapunov functions over either cone is an REP and the DSONC cone is a subset of the SONC cone, using the larger SONC cone is always preferable.

\section{Examples}
\label{sec:Examples}

In this section we explore some selected examples of polynomial systems whose stability we try to verify using the previously introduced algorithms, as well as the SOS approach.
The algorithms presented in this paper were implemented in \texttt{Python} and all Lyapunov functions found via these methods were verified using the POEM software \cite{poem:software}.
For the SOS results we have used the Julia package \texttt{SumOfSquares.jl} \cite{SumOfSquares}.
All computations were executed on an Apple M2 with 24GB RAM.
The full code and all examples can be accessed via \\

\noindent
\url{https://moto.math.nat.tu-bs.de/appliedalgebra_public/sonclyapunov_initial}. \\



In all the examples in this section the positive polynomials $p_1$ and $p_2$ as required by \cref{thm:gen_stability} for the SONC and SOS computations are chosen as variable polynomials of the form 
\begin{align*}
	p(\xb) \ = \ \sum_{\alpb \in A^+} \eps_{\alpb} \xb^{\alpb},
\end{align*}
where $A^+$ denotes the set of even exponents (of $V$ or $-\dot{V}$) with positive coefficients as determined in the procedure \texttt{distribute\_signs}, and the $\eps_{\alp}$ are variables which we constrain such that they are nonnegative and sum up to at least $10^{-3}$.

\begin{example}[{\cite[Example 1]{She:etal:PolyLyap}}]
	We begin with a simple example in $2$ variables in which all tested algorithms yield comparable results.
	The system we investigate is
	\begin{align}
		\begin{split}
			\dot{x_1} & \ = \ -x_1 - \frac{3}{2}x_1x_2^3,\\ 
			\dot{x_2} & \ = \ -x_2^3 + \frac{1}{2}x_1^2x_2^2.
		\end{split}
		\label{ex:levelsets}
	\end{align}
	This system has a unique equilibrium in the origin, which means that all the presented algorithms are applicable. 
	We attempt to search for a Lyapunov function of the form $V(\xb) = c_1 x_1^2 + c_2 x_2^2$, and the \cref{Alg:primalREPalg} find
	\begin{align*}
		V(\xb) & \ = \ \frac{3}{4}x_1^2 + \frac{1}{4}x_2^2.
	\end{align*}
	The SOS approach finds $V(\xb) = \frac{2}{3}x_1^2 + 2x_2^2$.
	The phase plot of \cref{ex:levelsets} and the level sets of this $V(\xb)$ are shown in \cref{fig:levelsets}.
	\label{ex:1}
\end{example}

\begin{figure}[hbt]
	\includegraphics[draft=false, width=.75\textwidth]{./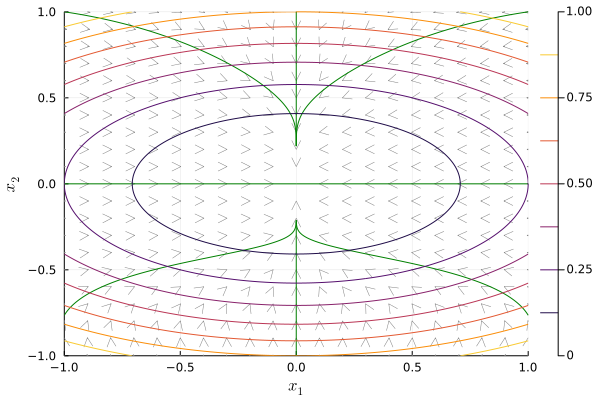}
	\caption{Phase plot of the system \cref{ex:levelsets} including level sets of the found Lyapunov function.}
	\label{fig:levelsets}
\end{figure}

\begin{example}[{\cite[Example 1]{PapachristodoulouPrajna:LyapunovSOS}}]
	We now attempt to verify the stability of the system
	\begin{align*}
		\begin{array}{l l}
			\dot{x_1} \ = \ -x_1^3 + 4x_2^3 - 6x_3x_4, \qquad & \dot{x_4} \ = \ x_1x_3 + x_3x_6 - x_4^3, \\
			\dot{x_2} \ = \ -x_1 - x_2 + x_5^3, \qquad & \dot{x_5} \ = \ -2x_2^3 - x_5 + x_6,\\
			\dot{x_3} \ = \ x_1x_4 - x_3 + x_4x_6, \qquad & \dot{x_6} \ = \ -3x_3x_4 - x_5^3 - x_6
		\end{array}
	\end{align*}

	in $6$ variables of degree $3$ in the unique equilibrium $\xb^\ast = \ob$ by searching for a Lyapunov function of the form $V(\xb) = \sum_{j = 1}^6 c_j x_j^2$.
	However, \cref{Alg:primalREPalg} does not find a suitable Lyapunov function of this form.
	The SOS approach is also unable to find a Lyapunov function of degree $2$.

	For the SONC based approach we find that it is sufficient to add terms $c_7 x_2^4$ and $c_8 x_5^4$ to the above $V(\xb)$. 
	Now we find the Lyapunov function
	\begin{align*}
		V(\xb) & \ = \ 
		3.426139247207665x_6^2 + 1.7130696236038325x_5^4 + 10.278417741622995x_4^2 \\
		& \qquad + 3.426139247207665x_2^4 + 1.7130696236038325x_1^2.
	\end{align*}

	The Lyapunov function of degree $4$ found via SOS is 
	\begin{align*}
		V(\xb) \ = \
		& 1.7448099617085968x_{2}^{4} + 0.8723988328198938x_{5}^{4} \\
		&+ 0.0010865978195663756x_{1}x_{3}x_{4} + 0.001522455857803553x_{3}x_{4}x_{5} \\
		&+ 0.9336697311290476x_{1}^{2} + 0.41698555619326955x_{1}x_{2} \\
		&+ 0.038922626081588314x_{1}x_{5} + 0.20849276323163046x_{2}^{2} \\ 
		&+ 0.03892256726985556x_{2}x_{5} + 2.6975458027715975x_{3}^{2} + 1.9302454720518627x_{4}^{2} \\
		&+ 0.38197987972890357x_{5}^{2} + 1.6669816282831071x_{6}^{2}.
	\end{align*}
	\label{ex:2}
\end{example}



\begin{example}
	We now investigate the system
	\begin{align*}
		\dot{x_1} & \ = \ -x_1^3x_2^2 + x_1x_2^4 - x_1x_3^2, \\
		\dot{x_2} & \ = \ -2x_1^2x_2^3 + 5x_1^2x_2x_3^2 + x_2^2x_3^3, \\
		\dot{x_3} & \ = \ -3x_1^2x_2^2x_3 - x_2^3x_3^2 - x_3^5 + x_1^2x_3.
	\end{align*}
	If we try to compute a degree $2$ Lyapunov function using the algorithms as described in this paper, then no result is returned.
	However, we can find the Lyapunov function 
	\begin{align*}
		V(\xb) = \frac{1}{2}x_1^2 + \frac{1}{2}x_2^2 + \frac{1}{2}x_3^2
	\end{align*}
	by making the following modification to the SONC approach.
	Recall that in the allocation of the signs in \texttt{distribute\_signs} we make the choice that all terms with even exponents have to be positive.
	In this case, we omit this restriction for all terms in the interior of the Newton polytope and instead make these terms negative.

	It turns out that the SOS method does not find a degree $2$ Lyapunov function for this system, even though the above function $V$ is clearly a sum of squares.
	The reason is that the negative derivative of $V$ is given by 
	\begin{align*}
		-\dot{V}(\xb) = x_3^6 - 2x_1^2x_2^2x_3^2 + x_1^2x_2^4 + x_1^4x_2^2.
	\end{align*}
	This is the homogenized version of the Motzkin polynomial 
	which is famously not SOS, see \cite{Motzkin:AMGMIneq}.
	We attempted to increase the degree up to $8$, but were unable to get a verifiable result using the SOS approach. 
	That is, either there was no result returned, or (e.g. for the cases of degrees $4, 6$ and $8$) results were returned, but the problem status returned by the solver was ``unknown'' each time. 
	This means that the returned function possibly violates the set constraints. 
	In these cases, we checked the nonnegativity of the returned functions individually and all of them turned out to be unbounded and thus not viable Lyapunov functions.
	\label{ex:5}
\end{example}

So far, we have only presented examples where we are able to find Lyapunov functions with the circuit-based algorithms.
However, we cannot guarantee success for arbitrary polynomial systems, as the following example demonstrates.

\begin{example}[{\cite[Example 2]{She:etal:PolyLyap}}]
	Our algorithms are unsuccessful for Lyapunov candidates of degree up to $8$ for the system
	\begin{align*}
		\dot{x_1} & \ = \ x_2-x_1^3 + x_1x_2^4 \\
		\dot{x_2} & \ = \ -x_1^3 - x_2^5.
	\end{align*}
	By searching for SOS Lyapunov functions, we find 
	\begin{align*}
		V(\xb) & \ = \ 0.044373193847826196x_{1}^{5}x_{2} + 0.22554047027447183x_{1}^{3}x_{2}^{3} \\
		&+ 0.17304563146952115x_{1}^{2}x_{2}^{4} + 0.4653621747641972x_{1}x_{2}^{5} + 0.757506544645783x_{2}^{6} \\
		&+ 0.5557146174641293x_{1}^{4} + 1.1114262218679964x_{2}^{2},
	\end{align*}
	which is indeed not a SONC polynomial.
	\label{ex:6}
\end{example}

\section{Limitations and Outlook}
\label{sec:Limitations}

\subsection{Existence of Polynomial Lyapunov functions}

Even though we only study the stability of \emph{polynomial} dynamical systems \cref{eq:nonlinearsystem}, it can happen that there exists no polynomial Lyapunov function despite the system being globally asymptotically stable.
An example of this is given in the following theorem from \cite{Ahmadi:Krstic:Parrilo:stablenotpolynomial}.

\begin{theorem}[\cite{Ahmadi:Krstic:Parrilo:stablenotpolynomial}]
	The polynomial vector field 
	\begin{align*}
		\dot{x_1} & \ = \ -x_1 + x_1x_2, \\
		\dot{x_2} & \ = \ -x_2
	\end{align*}
	is globally asymptotically stable in the equilibrium $\xb^\ast = \ob$, but does not admit a polynomial Lyapunov function.
\end{theorem}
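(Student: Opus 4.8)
The plan is to split the statement into two independent parts: global asymptotic stability of the origin, and non-existence of a polynomial Lyapunov function. For the first part I would integrate the system explicitly. The second equation gives $x_2(t) = x_2(0)e^{-t}$, and substituting this into $\dot{x_1} = x_1(x_2-1)$ and integrating the logarithmic derivative yields
\[
	x_1(t) \ = \ x_1(0)\exp\bigl(x_2(0)(1-e^{-t}) - t\bigr).
\]
Since the exponent tends to $-\infty$ as $t \to \infty$, both coordinates converge to $0$; bounding $x_2(0)(1-e^{-t})$ by $\max\{x_2(0),0\}$ gives the $\delta$--$\epsilon$ estimate for stability, so the origin is globally asymptotically stable.

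The hard part is the non-existence of a polynomial Lyapunov function, and the key phenomenon is the \emph{transient blow-up} of $x_1$. Suppose toward a contradiction that $V$ is a polynomial with $V(\ob) = 0$, $V > 0$ on $\R^2 \setminus \{\ob\}$, and $\dot V \le 0$ everywhere. I would fix the initial condition $(1, a)$ with $a$ large and locate where its trajectory travels. Maximizing the exponent $g(t) = a(1 - e^{-t}) - t$ at $t^\ast = \ln a$ shows that at time $t^\ast$ the trajectory sits at $(x_1^\ast(a), 1)$ with $x_1^\ast(a) = e^{a-1}/a$, which grows \emph{exponentially} in $a$ while its second coordinate is pinned at $1$. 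Because $\dot V \le 0$ forces $t \mapsto V(x(t))$ to be non-increasing, we obtain $V(x_1^\ast(a), 1) \le V(1, a)$, and the right-hand side is a polynomial in $a$, hence bounded by $C a^{d}$ with $d = \deg V$.

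From here I would extract the contradiction from the mismatch between exponential spatial growth and polynomial temporal decay of the Lyapunov value. Writing $q(s) = V(s, 1)$ and using $a \sim \ln x_1^\ast(a)$, the inequality reads $q(s_a) \le C' (\ln s_a)^{d}$ along the sequence $s_a := x_1^\ast(a) \to \infty$. Since $q(s_a) = V(s_a, 1) > 0$ and a univariate polynomial of degree $\ge 1$ dominates every power of $\log$, the polynomial $q$ must be constant. Repeating the computation with the trajectory stopped when $x_2 = c$ (at time $\ln(a/c)$, where $x_1$ again blows up exponentially) shows $V(\cdot, c)$ is constant for every $c > 0$; the polynomial $V(x_1, x_2) - V(0, x_2)$ then vanishes on an open set and hence identically, so $V$ does not depend on $x_1$. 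But then $V(x_1, 0) = V(\ob) = 0$ for all $x_1$, contradicting $V > 0$ on $\R^2 \setminus \{\ob\}$.

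I expect the main obstacle to be the transient-growth estimate: pinpointing the precise trajectory and time at which $x_1$ is exponentially large while $x_2$ stays moderate, and then converting the resulting growth gap into the clean algebraic conclusion that a candidate $V$ cannot depend on $x_1$. The rest — the explicit integration and the monotonicity inequality — is routine once this mechanism is isolated.
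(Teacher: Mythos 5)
The paper states this result as a citation to Ahmadi--Krstic--Parrilo and gives no proof of its own, so there is nothing internal to compare against; your argument is correct and is essentially the one from the cited source: explicit integration for global asymptotic stability, then the transient blow-up of $x_1$ along the trajectory from $(1,a)$ reaching $(e^{a-1}/a,\,1)$ at time $\ln a$, combined with monotonicity of $V$ along trajectories, to pit exponential growth against polynomial growth. Your extra step of varying the stopping level $x_2 = c$ to rule out the degenerate case where $V(\cdot,1)$ is constant (concluding $V$ is independent of $x_1$ and hence vanishes on the $x_1$-axis) closes the one gap the quick version of this argument leaves open.
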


In this case, \cref{Alg:primalREPalg} is naturally unsuitable for proving stability.



\subsection{Existence of SONC-Lyapunov Functions}

We further point out once more that even if a polynomial Lyapunov function exists, it is not clear whether we can also find a SONC-Lyapunov function.
One indicator for the existence of a SONC-Lyapunov function is the support structure of the polynomials $\dot{x_i} = f_i$ defining the dynamical system.
If, for example, the $f_i$ can be decomposed into suitable ``negative circuit polynomials'', i.e., polynomials which satisfy that
\begin{enumerate}
	\item the support sets are minimally affine dependent, and
	\item if $\alpb$ is a vertex of the Newton polytope, then $\alpb + \Vector{e}_i \in (2 \N)^n$ and $c_{\alpb} < 0$, where $\Vector{e}_i$ denotes the $i$-th unit vector and $c_{\alpb}$ denotes the coefficient corresponding to $\alpb$,
\end{enumerate}
then \cref{Alg:primalREPalg} will find a Lyapunov function whenever the inner terms of these negative circuit polynomials are sufficiently small. 
Here, ``sufficiently small'' means that the coefficients of these terms have absolute values which are bounded by a circuit-number-like condition.
If this is satisfied, then we can even find a Lyapunov function of the form $V(\xb) = \sum_{j = 1}^n c_j x_j^2$.

\subsection{Extension to Constrained Systems}

So far, we have only considered the unconstrained case.
However, in numerous applications we have to consider the Lyapunov search problem on restrictive constraint sets.
While we have not yet implemented the constrained case for the (D)SONC approaches presented in this article, we are planning to extend the algorithms in future work.
A natural next step in this direction is using the \emph{X-SAGE} methods, see \cite{Murray:Chandrasekaran:Wierman:SigOptREP, Murray:Naumann:Theobald:Sublinear,Dressler:Murray:AlgebraicPerspectives}.
This will also enable us to explore more real-world examples and applications.

\subsection{Optimal Sign Distributions}

As described in \cref{sec:StabilitySONC} and illustrated again in \cref{ex:5}, our method $\mathtt{distribute\_signs}$ is not yet optimal. 
An interesting line of future research would be to figure out how to, either, make the best possible choice for the sign distribution, or circumvent the necessity of having to choose signs altogether.


\bibliographystyle{amsalpha}
\bibliography{lyapunov_polyopt}

\appendix
\section{Pseudocode}

We give the explicit pseudocode required for computing the sign distribution according to $\mathtt{distribute\_signs}$ here.
We assume that all constraints are of the form $\mathtt{lhs} \le \mathtt{rhs}$.

\begin{algorithm}[$\mathtt{distribute\_signs}$]~

	\noindent\INPUT{support sets $A, A^\prime \subset \N^n$ and variable coefficient vectors $\Vector{c} \in \R^A$ and $\Vector{d} \in \R^{A^\prime}$ of $P(\xb)$ and $P^\prime(\xb)$, set of existing constraints $\mathtt{constraints}$} \\
	\OUTPUT{set of sign constraints}
	\begin{algorithmic}[5]
		\State $A^+ \gets A \cap (2\N)^n$; $A^- \gets A \setminus A^+$
		\State $(A^\prime)^+ \gets A^\prime \cap (2\N)^n$; $(A^\prime)^- \gets A^\prime \setminus (A^\prime)^+$
		\State $\mathtt{hull\_P} \gets \vertices{\conv(A)}$
		\For{$\mathtt{exponent}$ in $\mathtt{hull\_P}$}
			\If{$\mathtt{exponent}$ in $A^-$}
				\State $A \gets A \setminus \set{\mathtt{exponent}}$, $A^- \gets A^- \setminus \set{\mathtt{exponent}}$
			\EndIf
		\EndFor
		\State $\mathtt{hull\_Pprime} \gets \vertices{\conv(A^\prime)}$
		\For{$\mathtt{exponent}$ in $\mathtt{hull\_Pprime}$}
			\If{$\mathtt{exponent}$ in $(A^\prime)^-$}
				\State $A^\prime \gets A^\prime \setminus \set{\mathtt{exponent}}$, $(A^\prime)^- \gets (A^\prime)^- \setminus \set{\mathtt{exponent}}$
			\EndIf
		\EndFor
		\For{$\alpb \in A^+$}
			\State $\mathtt{constraints} \mathrel{+}= [c_{\alpb} \ge 0]$ 
		\EndFor
		\For{$\alpb \in (A^\prime)^+$}
			\State $\mathtt{constraints} \mathrel{+}= [d_{\alpb} \ge 0]$ 
		\EndFor
		\State $\mathtt{constraints\_reduced} \gets \mathtt{constraints}[1]$
		\For{$\mathtt{con}$ in $\mathtt{constraints}[2: \mathtt{end}]$}
			\State $\mathtt{objective} \gets \mathtt{lhs}(\mathtt{con}) - \mathtt{rhs}(\mathtt{con})$
			\State $\mathtt{problem} = \mathtt{maximize}(\mathtt{objective}, \mathtt{constraints\_reduced})$
			\State $\mathtt{sol} = \mathtt{solve}(\mathtt{problem})$
			\If{$\mathtt{sol} \ge 1 \times 10^{-3}$}
				\State $\mathtt{constraints\_reduced} \mathrel{+}= \mathtt{con}$
			\EndIf 
		\EndFor

		\State \Return{$\mathtt{constraints\_reduced}$}
	\end{algorithmic}
\label{Alg:distributeSigns}
\end{algorithm}

\end{document}